\newcommand{\ZZ}{\mathbb{Z}}
\newcommand{\cI}{\mathcal{I}}
\newcommand{\cJ}{\mathcal{J}}
\newcommand{\cH}{\mathcal{H}}
\newcommand{\cK}{\mathcal{K}}
\newcommand{\cM}{\mathcal{M}}
\newcommand{\sgn}{\mathrm{sgn}}
\newcommand{\id}{\mathrm{id}}
\newcommand{\ten}{\otimes}
\newtheorem{theorem}{Theorem}[section]
\newtheorem*{theorem*}{Theorem}
\newtheorem{proposition}[theorem]{Proposition}
\newtheorem{corollary}[theorem]{Corollary}
\newtheorem{lemma}[theorem]{Lemma}
\theoremstyle{definition}
\newtheorem{example}[theorem]{Example}
\theoremstyle{definition}
\newtheorem{definition}[theorem]{Definition}
\newtheorem{remark}[theorem]{Remark}
\title[Cancellation-free antipode formula for matroid Hopf algebra]{A cancellation-free antipode formula (for uniform matroids) for the restriction-contraction matroid Hopf algebra}
\author{Eric Bucher}
\address{Department of Mathematics,
Michigan State University, East Lansing, MI 48824, USA}
\email{ebuche2@math.msu.edu}
\author{Jacob P. Matherne}
\address{Department of Mathematics and Statistics,
University of Massachusetts Amherst, Amherst, MA 01003, USA}
\email{matherne@math.umass.edu}
\thanks{The first author was supported by the LSU VIGRE Grant and the LSU GAANN Grant}
\thanks{The second author was supported by an LSU Dissertation Year Fellowship.}
\begin{document}

\begin{abstract}
In this paper, we give a cancellation-free antipode formula (for uniform matroids) for the restriction-contraction matroid Hopf algebra. The cancellation-free formula expresses the antipode of uniform matroids as a sum over certain ordered set partitions. 
\end{abstract}
\maketitle

\section{Introduction and methods}
\subsection{Introduction}
Many combinatorial structures can be used as building blocks to construct graded connected Hopf algebras. Examples come from graphs, simplices, polynomials, Young tableaux, and many more. In this paper we will focus on the restriction-contraction Hopf algebra of matroids first introduced by Schmitt \cite{s94}.

When a Hopf algebra arises from combinatorial objects, it is often referred to as a \emph{combinatorial Hopf algebra} (see \cite{gr16} for an excellent treatment of Hopf algebras, especially as they appear in combinatorics). Motivations for studying these algebras appear throughout many diverse areas of mathematics, including: combinatorics, representation theory, mathematical physics, and K-theory. Given a combinatorial Hopf algebra, the computation of its antipode gives combinatorial identities for the objects which built up the algebra. For this reason, finding the simplest expression for the antipode can prove extremely valuable. 

In general the antipode is given in terms of satisfying certain commutative diagrams (see Definition \ref{def:hopfalg}), but in the setting where the Hopf algebra is both graded and connected we have a formula given by Takeuchi which describes the map explicitly.

\begin{theorem*}[\cite{t71}]
A graded, connected $\Bbbk$-bialgebra $\cH$ is a Hopf algebra, and it has a unique antipode $S$ whose formula is given by
\begin{equation}\label{eqn:tak}
S = \sum_{i \in \ZZ_{\geq 0}} (-1)^i \mu^{i-1} \circ \pi^{\ten i} \circ \Delta^{i-1}
\end{equation}
where $\mu^{-1} = \eta$, $\Delta^{-1} = \epsilon$, and $\pi: \cH \rightarrow \cH$ is the projection map defined by extending linearly the map
\[
\pi|_{\cH_\ell} = \left\{\begin{array}{ll}0 & \mathrm{if}\ \ell = 0,\\ \id & \mathrm{if}\ \ell \geq 1,\end{array}\right.
\]
where $\cH_{\ell}$ is the $\ell^{\text{th}}$ graded piece of $\cH$.
\end{theorem*}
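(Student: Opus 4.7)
The plan is to work in the convolution algebra $\mathrm{End}_{\Bbbk}(\cH)$, whose product is $f * g := \mu \circ (f \otimes g) \circ \Delta$ and whose unit is $u := \eta \circ \epsilon$. By definition an antipode is a two-sided $*$-inverse of $\id_{\cH}$, and inverses in an associative algebra are unique whenever they exist, so it suffices to exhibit an explicit two-sided inverse; this will simultaneously give existence of $S$, uniqueness of $S$, and the formula \eqref{eqn:tak}.

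The key observation is that graded-connectedness yields the decomposition $\id_{\cH} = u + \pi$: on $\cH_0 = \Bbbk$ the projection $\pi$ vanishes while $\eta\epsilon$ acts as the identity, and on $\cH_{\geq 1}$ the two roles are swapped. Inverting $u + \pi$ via a formal geometric series suggests as our candidate the right-hand side of \eqref{eqn:tak}, which I abbreviate $S = \sum_{i \geq 0} (-1)^i \pi^{*i}$ with $\pi^{*i} := \mu^{i-1} \circ \pi^{\otimes i} \circ \Delta^{i-1}$ (so in particular $\pi^{*0} = u$). First I would verify that this sum is well-defined on each $x \in \cH$. If $x \in \cH_n$, then because $\Delta$ preserves the grading one has $\Delta^{i-1}(x) \in \bigoplus_{n_1 + \cdots + n_i = n} \cH_{n_1} \otimes \cdots \otimes \cH_{n_i}$; the map $\pi^{\otimes i}$ kills any pure tensor in which some factor lies in degree zero, so a surviving summand forces each $n_j \geq 1$ and hence $i \leq n$. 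Therefore $\pi^{*i}(x) = 0$ for all $i > n$, and $S(x)$ is a finite sum.

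Next I would verify $S * \id_{\cH} = u = \id_{\cH} * S$ by telescoping. Using bilinearity of $*$, the fact that $u$ is the $*$-unit, and the identity $\pi^{*i} * \pi = \pi^{*(i+1)}$ (which follows from associativity of $\mu$ together with coassociativity of $\Delta$, so that iterated products and coproducts are unambiguous), one computes
\[
S * \id_{\cH} = S + \sum_{i \geq 0}(-1)^i \pi^{*(i+1)} = \sum_{i \geq 0}(-1)^i \pi^{*i} - \sum_{j \geq 1}(-1)^j \pi^{*j} = \pi^{*0} = u,
\]
and symmetrically $\id_{\cH} * S = u$. Both equalities rely on the bialgebra compatibility between $\mu$ and $\Delta$, which is precisely what makes $*$ associative with unit $u$.

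The main obstacle is the convergence argument in the second paragraph; graded-connectedness enters there in an essential way, to guarantee that the formal geometric series $(u + \pi)^{-1}$ truncates on each element. Without a grading there would be no finiteness, and without connectedness the decomposition $\id_{\cH} = u + \pi$ would fail. Once this truncation is secured, everything else is a formal telescoping manipulation in the convolution algebra.
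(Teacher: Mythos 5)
This is a result the paper imports from Takeuchi's 1971 paper and does not prove, so there is no in-paper argument to compare against; your proposal supplies the standard proof, and it is correct. You work in the convolution algebra $\mathrm{End}_{\Bbbk}(\cH)$, use connectedness to write $\id_{\cH} = u + \pi$ with $u = \eta\circ\epsilon$, use gradedness of $\Delta$ to show $\pi^{*i}$ vanishes on $\cH_n$ for $i > n$ (so the geometric series is locally finite), and telescope to verify that $\sum_{i\geq 0}(-1)^i\pi^{*i}$ is a two-sided $*$-inverse of $\id_{\cH}$; uniqueness is then uniqueness of inverses in an associative unital algebra. All of these steps are sound, and this is essentially the argument one finds in standard references such as Grinberg--Reiner. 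One small correction to your closing remark: the associativity of $*$ and the fact that $u$ is its unit require only that $\mu$ be associative with unit $\eta$ and that $\Delta$ be coassociative with counit $\epsilon$ --- the convolution algebra $\mathrm{Hom}_\Bbbk(C,A)$ exists for any coalgebra $C$ and algebra $A$. The bialgebra compatibility (that $\Delta$ and $\epsilon$ are algebra maps) is not what makes the telescoping work; what your computation actually uses is the identity $\pi^{*i}*\pi = \pi^{*(i+1)}$, which is pure (co)associativity, together with the decomposition $\id_{\cH} = u+\pi$ coming from graded-connectedness. This does not affect the validity of the proof.
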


The only drawback to the formula above is that it can contain a large amount of cancellation. The goal in general is then to find refinements of this formula so that we can more easily compute the antipode of $\mathcal{H}$. Ideally this leads to a cancellation-free formula. Much work has been done in this area for specific Hopf algebras. Humpert and Martin \cite{hm12} found a cancellation-free formula for the incidence Hopf algebra on graphs. Aguiar and Mahajan \cite{am10} introduced an embedding into the Hopf monoid which was utilized by Aguiar and Ardila \cite{aa16} and later by Benedetti and Bergeron \cite{bb16} to compute various cancellation-free formulas for embeddable Hopf algebras. Benedetti and Sagan \cite{bs16} applied a sign-reversing involution to find a cancellation-free antipode formula for the shuffle Hopf algebra, the Hopf algebra of polynomials, and Hopf algebras related to QSym; among others. And very recently Benedetti, Hallam, and Macechek \cite{bhm16} were able to construct a cancellation-free formula for the Hopf algebra of simplicial complexes. 

In this paper, we begin working towards finding a cancellation-free antipode formula for the restriction-contraction Hopf algebra of matroids. We will define the multiplication and the comultiplication for this Hopf algebra in Section \ref{sec:prelims}, as well as give a brief background on matroids. The matroid terminology used throughout the paper will be consistent with that of Oxley's book \cite{o11}. Our main result is a cancellation-free formula for computing the antipode of uniform matroids in this algebra. 

\begin{theorem*}[Main Result]
The image of the uniform matroid $U^m_n$ under the antipode map $S$ is given by the following cancellation-free formula. Choose a total ordering on the ground set of the matroid, $<$:
\[
S(U^m_n) = \displaystyle \sum_{I,L} (-1)^{n-|L|+1} U^{|I|}_I \oplus U^{m-|I|}_L,
\]
 where $I,L$ ranges over all pairs of subsets of the ground set $E$ such that
\begin{itemize}
\item $I$ and $L$ are disjoint, 
\item $|I|<m$, and
\item $|I|+|L|\geq m$, and
\item if $|I|+|L|=m$ then $|L|=1$ and the element in $L$ is the maximal element of $I \cup L$ with respect to $<$.
\end{itemize}
\end{theorem*}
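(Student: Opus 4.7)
The plan is to expand Takeuchi's formula for $S(U^m_n)$ and simplify the resulting alternating sum by grouping terms according to the structure of each output matroid. Iterating the coproduct $\Delta(M) = \sum_{A \subseteq E(M)} M|_A \otimes M/A$, one finds that $S(U^m_n)$ equals an alternating sum indexed by ordered set partitions $\alpha = (A_1, \dots, A_i)$ of $E$ with nonempty parts, each weighted by $(-1)^i$ and contributing the matroid $\bigoplus_{j=1}^i U^{r_j}_{A_j}$, where the ranks $r_j = \max(0, \min(m - s_{j-1}, |A_j|))$ are determined by the cumulative block sizes $s_j = |A_1| + \cdots + |A_j|$.

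First I would classify each block as \emph{free} (if $s_j \leq m$, so $r_j = |A_j|$), \emph{middle} (if $s_{j-1} < m < s_j$, so $0 < r_j < |A_j|$), or \emph{rank-zero} (if $s_{j-1} \geq m$, so $r_j = 0$); since cumulative sums are increasing, at most one block is middle. I attach to $\alpha$ the signature $(I_\alpha, L_\alpha, K_\alpha)$ consisting of the union of its free blocks, its middle block (empty if none), and the union of its rank-zero blocks --- an ordered disjoint decomposition of $E$ --- and the output matroid depends only on the signature, equaling $U^{|I|}_I \oplus U^{m-|I|}_L \oplus U^0_K$.

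Next I would perform the signed count of compositions per signature. Fixing $(I,L,K)$, the compositions yielding this signature biject with a choice of (ordered set partition of $I$) $\times$ (optional middle block $L$) $\times$ (ordered set partition of $K$), and the signed sum factors into independent sums. Using the identity $\sum_{p \geq 0} (-1)^p\, p!\, S(n,p) = (-1)^n$ (equivalent to $\sum_p (1-e^x)^p = e^{-x}$), each factor collapses: when $L \neq \emptyset$ the signed count is $-(-1)^{|I|}(-1)^{|K|} = (-1)^{n - |L| + 1}$, matching the coefficient in the theorem, and when $L = \emptyset$ (which forces $|I| = m$) the signed count is $(-1)^m (-1)^{n-m} = (-1)^n$. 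The $L = \emptyset$ case, in which the output is the free matroid on the $m$-element set $I_\alpha$, I would reparametrize as an $(I,L)$ pair by setting $L = \{\max(I_\alpha)\}$ and $I = I_\alpha \setminus L$, using $U^{|I|}_I \oplus U^1_L = U^m_{I \cup L}$; this canonical choice of $L$ is the source of the technical fourth bullet in the theorem.

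The main obstacle is the careful combinatorial bookkeeping in the signed-count step, particularly handling the degenerate boundary cases (when $I$ or $K$ is empty, or when $m = n$) uniformly, and verifying that the signature-to-composition correspondence is clean. Cancellation-freeness of the final formula then follows because the surviving signatures give distinct labeled direct-sum decompositions (the summands live on different ground sets for different $(I,L)$), so no two terms collapse. As an alternative to the generating function identity, one could construct an explicit sign-reversing involution that merges or splits a canonically chosen pair of adjacent blocks of the same type (free-free or rank-zero--rank-zero), with the fixed points recovering precisely the surviving terms listed in the theorem.
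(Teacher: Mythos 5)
Your route is sound and, in the decisive step, genuinely different from the paper's. The paper also unpacks Takeuchi's formula into a signed sum over ordered set partitions (Proposition \ref{prop:unpacktak}) and shows that each partition's contribution depends only on the pair $(I,L)$ (Theorem \ref{thm:computetakterms} and Corollary \ref{cor:taktermdetbypair}), but it then eliminates the cancelling terms with a split--merge sign-reversing involution $\iota_<$ in the style of Benedetti--Sagan and characterizes its fixed points; this is exactly the ``alternative'' you sketch in your last sentence. Your main argument instead groups partitions by the signature $(I,L,K)$ and evaluates the signed multiplicity of each signature in closed form via $\sum_{p}(-1)^p p!\,S(n,p)=(-1)^n$; the factorization over (partitions of $I$) $\times$ ($L$) $\times$ (partitions of $K$) is legitimate because free, middle, and rank-zero blocks necessarily occur in that order. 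This trades the verification that $\iota_<$ is a well-defined sign-reversing involution for a generating-function identity and is arguably cleaner; the cost is that the order-dependent fourth bullet must be inserted by hand through your $L=\{\max(I_\alpha)\}$ reparametrization, whereas the paper's fixed-point analysis produces that convention automatically.

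One point must be confronted before you can claim the displayed formula. Your term for a signature, $U^{|I|}_I\oplus U^{m-|I|}_L\oplus U^0_K$ with $K=E\setminus(I\cup L)$, is the correct one: the elements contracted away after the middle block survive as loops, and since $S$ is a graded map every term of $S(U^m_n)$ must be a matroid on all $n$ elements. The stated formula (and the proof of Theorem \ref{thm:computetakterms}, which declares $M/I/L$ to be the trivial matroid) silently discards the factor $U^0_K$, which is a nontrivial element of $\Bbbk\widetilde{\cM}$ whenever $K\neq\emptyset$; as a sanity check, the antipode axiom forces $S(U^1_2)=-U^1_2+2\,U^1_1\oplus U^0_1$, not $-U^1_2+2\,U^1_1$. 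So your argument, carried out as written, does not land on the statement as printed --- nor should it. You need to either carry $\oplus\,U^0_{E\setminus(I\cup L)}$ into every term of your final formula or explicitly flag the discrepancy, rather than asserting that the coefficients ``match the theorem'' and stopping. The cancellation-freeness argument survives intact either way, since distinct pairs $(I,L)$ still yield distinct labeled direct-sum decompositions. (You may also wish to note that both your argument and the stated formula implicitly assume $m\geq 1$, since for $m=0$ there is no middle block and no maximal element to promote to $L$.)
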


In the future, we hope to utilize similar techniques to extend these results to other classes of matroids.

\subsection{Methods}\label{sec:methodology}

Before we move on to the background work we should briefly talk about the strategy of the proof. The authors believe that similar methodology could be applied to an assortment of Hopf algebras where cancellation-free formulas have still proven elusive. Therefore we would like to give a brief overview of our methods. We utilize similar approaches to those of Benedetti and Sagan \cite{bs16} by applying split-merge involution. 

\begin{enumerate}
\item The first thing that must be done is unpack the Takeuchi formula. We need to understand what this sum looks like in our particular Hopf algebra. In the algebra considered by this paper, this reduces to a sum over certain ordered set partitions (see Proposition \ref{prop:unpacktak}).

\item The Takeuchi summation contains many cancellations. Our next step is to systematically try and pair them off with terms that differ only in the sign. We do this by constructing a sign-reversing involution on our set of ordered set partitions. This involution needs to take one partition to another, and make sure that their associated terms in the Takeuchi summation are identical up to a sign change. 

\item This gives us a way to pair off the terms. Finding a negative term for each positive term by letting the involution guide you to the next counterpart. Therefore the only terms that will remain in the Takeuchi summation after the cancellation will be the terms that correspond to fixed points of the involution. 

\item The next step is to characterize the fixed points of our involution. If these fixed points correspond to unique terms in the Takeuchi summation, we know that this new formula is in fact cancellation free.
\end{enumerate}

This methodology is obviously not unique to antipode computations and can be found throughout combinatorial literature. The authors would like to reiterate that they were following in the footsteps of Benedetti and Sagan when they applied this technique to the restriction-contraction Hopf algebra of matroids.

\subsection*{Acknowledgements}

The authors would like to thank Carolina Benedetti and Bruce Sagan for helpful conversations.  They would also like to thank the LSU Mathematics Department for not evicting them from their offices even after they had graduated.

\section{Preliminaries}\label{sec:prelims}

\subsection{Matroids}

Matroids abstract the notion of linear independence of vectors in a vector space.  We first develop the basics of matroid theory that we will need for the rest of this paper.  We follow the notation of \cite{o11} and point to it for further details on matroids.
\begin{definition}
A \emph{matroid} $M$ is an ordered pair $(E, \cI)$ consisting of a finite set $E$ (called the ground set of $M$) and a collection $\cI$ of subsets of $E$ called independent sets, having the following three properties:
\begin{enumerate}
\item[(I1)] $\emptyset \in \cI$.
\item[(I2)] If $I \in \cI$ and $I^\prime \subseteq I$, then $I^\prime \in \cI$.
\item[(I3)] If $I_1$ and $I_2$ are in $\cI$ and $|I_1| < |I_2|$, then there is an element $e$ of $I_2 - I_1$ such that $I_1 \cup e \in \cI$.
\end{enumerate}
A \emph{loop} $e \in E$ is an element that does not belong to any independent set.
\end{definition}

We say two matroids $M = (E, \cI)$ and $N = (F, \cJ)$ are \emph{isomorphic} if there exists a bijection $f: E \xrightarrow{\sim} F$ between the ground sets which preserves independent sets; that is, if $I \in \cI$, then $f(I) \in \cJ$.  There are several ways to get new matroids from old ones, which we recall below.  For Definitions \ref{def:dirsum}--\ref{def:contract}, let $M = (E,\cI)$ and $N = (F, \cJ)$ be matroids where $E$ and $F$ are disjoint.

\begin{definition}\label{def:dirsum}
The \emph{direct sum of $M$ and $N$}, denoted $M \oplus N$, is the matroid $(E \cup F, \{I \cup J\ |\ I \in \cI, J \in \cJ\})$.
\end{definition}

\begin{definition}
Let $S \subseteq E$.  The \emph{restriction of $M$ to $S$}, denoted $M|_S$, is the matroid $(S, \cK)$ with $\cK = \{I \in \cI\ |\ I \subseteq S\}$.
\end{definition}

\begin{definition}\label{def:restriction}
Let $e \in E$.  The \emph{deletion of $e$ from $M$}, denoted $M \setminus e$, is the matroid $(E - \{e\}, \cI^\prime)$ where $\cI^\prime = \{I \subseteq E - \{e\}\ |\ I \in \cI\}$.
\end{definition}

We do not explicitly make use of deletion in this note:  we only use it in the definition of contraction below.
\begin{definition}\label{def:contract}
Let $e \in E$.  The \emph{contraction of $e$ from $M$}, denoted $M/e$, is the matroid $(E-\{e\},\cI^{\prime \prime})$, where $\cI^{\prime \prime} = \{I \subseteq E - \{e\}\ |\ I \cup \{e\} \in \cI\}$ if $e$ is not a loop, and $\cI^{\prime \prime} = \cI^\prime$ (see Definition \ref{def:restriction}) if $e$ is a loop.

More generally, if $S \subseteq E$, then the \emph{contraction of $S$ from $M$}, denoted $M/S$, is the matroid on $E-S$ gotten by contraction of each element $s \in S$ separately (order does not matter).
\end{definition}

Throughout the paper, when the ground set of a matroid $M = (E,\cI)$ is understood, we sometimes specify $M$ by its independent sets.  In this case, we will write $M = \cI$ as in the next example.

\begin{example}\label{ex:u24}
We give an example of each of the notions in the previous definitions.  Let $E$ be the set $\{v_1,\ldots, v_4\}$ of column vectors of the matrix
\[
\left(\begin{array}{cccc}
1 & 0 & 1 & 1 \\
0 & 1 & 1 & -1
\end{array}\right).
\]
One can check that the set of linearly independent subsets $\cI$ of $\{v_1,\ldots,v_4\}$ make $(E,\cI)$ into a matroid.  In fact,
\[
\cI = \{\emptyset,\{v_1\}, \{v_2\}, \{v_3\}, \{v_4\}, \{v_1,v_2\}, \{v_1,v_3\}, \{v_1,v_4\}, \{v_2,v_3\}, \{v_2,v_4\}, \{v_3,v_4\}\}. 
\]
The restriction $M|_{\{v_2\}} = \{\emptyset, \{v_2\}\}$, the contraction $M/{v_2} = \{\emptyset, \{v_1\}, \{v_3\}, \{v_4\}\}$, and $M|_{\{v_2\}} \oplus M/{v_2} = \{\emptyset, \{v_1\}, \{v_2\}, \{v_3\}, \{v_4\}, \{v_1,v_2\},\{v_2,v_3\},\{v_2,v_4\}\}$.
\end{example}

For the rest of the paper, we may write subsets of matroids without braces and with their elements concatenated (like $v_1v_2$ for $\{v_1,v_2\}$) in hopes of alleviating some notational woes.

We now introduce the class of matroids that we will study in this paper.

\begin{definition}
Let $n,m \in \ZZ_{\geq 0}$ with $m \leq n$.  The \emph{uniform matroid of rank $m$ on an $n$-element set}, denoted $U^m_n$, is the matroid on an $n$-element set $E$ whose collection of independent sets $\cI$ is the set of all subsets of $E$ of cardinality less than or equal to $m$.
\end{definition}

\begin{remark} We will sometimes denote $U^t_I$ to be the uniform matroid $U^t_{|I|}$ with ground set $I$. This allows us to keep track of the ground sets of uniform matroids.
\end{remark}

\begin{definition}
The matroid $U_t^t$ for any $t$ is called the \emph{free matroid on $t$ elements}.  The matroid $U^0_0$ is called the \emph{trivial matroid}.
\end{definition}

\begin{remark}
The matroid in Example \ref{ex:u24} is the uniform matroid $U^2_4$.
\end{remark}

We will make frequent use of the following well-known result whose proof is immediate from the definitions.

\begin{lemma}\label{lem:closedunderminors}
Let $U^m_n$ be a uniform matroid with ground set $E$, let $e \in E$, and let $S \subseteq E$.  We have the following formulas:
\[
U^m_n|_S \cong U^{\min(m,|S|)}_{|S|}
\]
and
\[
U^m_n/e \cong \left\{\begin{array}{ll}U^{m-1}_{n-1} & \text{if } m>0, \\ U^m_{n-1} & \text{if } m = 0. \end{array}\right.
\]
\end{lemma}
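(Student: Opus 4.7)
The plan is to unwind the definitions of restriction and contraction (Definitions \ref{def:restriction}--\ref{def:contract}) applied to the concrete description of uniform matroids, whose independent sets are simply the ``small enough'' subsets of the ground set. Since the definitions of $U^m_n$, $M|_S$, and $M/e$ are each given by an explicit set-theoretic condition, both isomorphisms should fall out by comparing collections of subsets directly.

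For the restriction formula, I would let $E$ be the ground set of $U^m_n$ and apply the definition of $M|_S$ to read off that the independent sets of $U^m_n|_S$ are exactly the subsets $I \subseteq S$ with $|I| \leq m$. The key observation is then that $|I| \leq m$ and $I \subseteq S$ together force $|I| \leq \min(m, |S|)$, and conversely every subset of $S$ of size at most $\min(m,|S|)$ is automatically of size at most $m$. So the collection of independent sets of $U^m_n|_S$ coincides, after identifying ground sets, with the collection of subsets of an $|S|$-element set of size at most $\min(m,|S|)$, which is by definition $U^{\min(m, |S|)}_{|S|}$.

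For the contraction formula, I would split into the two cases in the statement of Definition \ref{def:contract}. If $m = 0$, then the only independent set of $U^0_n$ is $\emptyset$, so every element of $E$ is a loop; by the loop clause of Definition \ref{def:contract}, $U^0_n/e$ has the same independent sets as $U^0_n \setminus e$, namely just $\emptyset$, yielding $U^0_{n-1}$. If $m > 0$, then $\{e\}$ is independent, so $e$ is not a loop, and the non-loop clause gives $\cI'' = \{I \subseteq E - \{e\} : I \cup \{e\} \in \cI\}$. Since $I \cup \{e\} \in \cI$ is equivalent to $|I \cup \{e\}| \leq m$, i.e. $|I| \leq m - 1$, the independent sets are exactly the subsets of the $(n-1)$-element set $E - \{e\}$ of size at most $m-1$, giving $U^{m-1}_{n-1}$.

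There is no real obstacle here: both verifications amount to matching two descriptions of the same collection of subsets, with the only mild subtlety being the case distinction for contraction forced by the loop/non-loop branch of Definition \ref{def:contract}. This is presumably why the authors state that the proof is immediate from the definitions.
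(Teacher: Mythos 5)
Your proof is correct and is exactly the definition-unwinding argument the paper has in mind when it calls the lemma ``immediate from the definitions'': the restriction case reduces to the observation that $I \subseteq S$ and $|I| \leq m$ together are equivalent to $|I| \leq \min(m,|S|)$, and the contraction case correctly tracks the loop/non-loop branch of Definition \ref{def:contract}. Nothing is missing; you have simply written out the details the authors chose to omit.
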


We will also make use of the following lemma.

\begin{lemma}\label{lem:free}
If $A$ and $B$ are disjoint finite sets, then the matroid \[U^{|A|}_A \oplus U^{|B|}_B=U^{|A|+|B|}_{A\bigcup B}\]
\end{lemma}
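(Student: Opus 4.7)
The plan is to unwind the definitions of the direct sum and of a free (uniform) matroid, and then verify that both sides of the claimed equality have the same ground set and the same collection of independent sets. Since $A$ and $B$ are disjoint, $A \cup B$ is already the ground set on both sides, so the only real content is matching up independent sets.

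First I would observe that by the definition of the free matroid $U^{|A|}_A$, the collection of independent sets is the full power set $2^A$, and similarly for $U^{|B|}_B$ the collection of independent sets is $2^B$. Plugging these into Definition \ref{def:dirsum}, the independent sets of $U^{|A|}_A \oplus U^{|B|}_B$ are exactly the sets of the form $I \cup J$ with $I \subseteq A$ and $J \subseteq B$. On the other hand, the right-hand side $U^{|A|+|B|}_{A \cup B}$ is the uniform matroid of maximal rank on $A \cup B$, so its independent sets are all subsets of $A \cup B$.

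To conclude, I would use the disjointness hypothesis: every subset $K \subseteq A \cup B$ decomposes uniquely as $K = (K \cap A) \cup (K \cap B)$ with $K \cap A \subseteq A$ and $K \cap B \subseteq B$. This gives a bijection between subsets of $A \cup B$ and pairs $(I,J) \in 2^A \times 2^B$ whose union is $K$, showing the two families of independent sets coincide. Both matroids also share the same ground set $A \cup B$, so they are literally equal (not merely isomorphic).

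There is really no substantive obstacle here; the statement is essentially a bookkeeping lemma. The only small care to take is in pointing out that the disjointness of $A$ and $B$ is exactly what is needed for the decomposition $K = (K\cap A) \cup (K\cap B)$ to put every subset of $A\cup B$ into the form required by the direct-sum definition. Everything else follows by unpacking Definition \ref{def:dirsum} and the definition of the free matroid.
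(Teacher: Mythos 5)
Your proof is correct and complete, but it takes a genuinely different route from the one in the paper. You argue directly from the definitions: the independent sets of $U^{|A|}_A$ and $U^{|B|}_B$ are the full power sets $2^A$ and $2^B$, Definition \ref{def:dirsum} then makes the independent sets of the direct sum exactly $\{I \cup J \mid I \subseteq A,\ J \subseteq B\}$, and disjointness gives the decomposition $K = (K \cap A) \cup (K \cap B)$ identifying this family with $2^{A \cup B}$, which is precisely the collection of independent sets of $U^{|A|+|B|}_{A\cup B}$. The paper instead invokes two facts quoted from \cite{o11}: that the free matroid $U^t_t$ is the cycle matroid of a forest with $t$ edges, and that direct sums of cycle matroids correspond to disjoint unions of graphs; the lemma then follows because a disjoint union of forests is a forest. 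Your argument is more elementary and self-contained, requiring nothing beyond the definitions already stated in Section \ref{sec:prelims}, and it also makes transparent exactly where disjointness is used; the paper's argument is shorter on the page but outsources the work to external graph-theoretic results and, strictly speaking, only yields the identity up to isomorphism of cycle matroids rather than the literal equality of set systems that your computation provides. Either proof is acceptable; yours is arguably the cleaner one for this statement.
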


\begin{proof}
We start by utilizing the well-known results found in \cite{o11} that the free matroid $U_t^t$ is the cycle matroid of a forest with $t$ edges and that the direct sum of two cycle matroids is the same as the cycle matroid for the direct sum of the two graphs. Then $U^{|A|}_A$ is the cycle matroid for a forest with $|A|$ edges and $U^{|B|}_B$ is the cycle matroid for a forest with $|B|$ edges. The direct sum of the two graphs is a forest with $|A|+|B|$ edges, and so $U^{|A|}_A \oplus U^{|B|}_B=U^{|A|+|B|}_{A\bigcup B}$.
\end{proof}

\subsection{Hopf algebras}\label{sec:hopfalgs}
For the remainder of this paper, we fix a field $\Bbbk$.  We now work towards the definition of a Hopf algebra.

\begin{definition}\label{def:alg}
An \emph{associative $\Bbbk$-algebra $A$} is a $\Bbbk$-module together with a $\Bbbk$-linear operation $\mu: A \ten A \rightarrow A$ called multiplication, and a $\Bbbk$-linear map $\eta: \Bbbk \rightarrow A$ called a unit.  These maps must make the following diagrams commute
\[
\begin{tikzcd}
A \ten A \ten A \ar{r}{\id \ten \mu} \ar{d}[swap]{\mu \ten \id} & A \ten A \ar{d}{\mu} & & A \ten \Bbbk \ar{d}[swap]{\id \ten \eta} & A \ar{l}[swap]{\iota_1} \ar{d}{\id} \ar{r}{\iota_2} & \Bbbk \ten A \ar{d}{\eta \ten \id} \\
A \ten A \ar{r}[swap]{\mu} & A & & A \ten A \ar{r}[swap]{\mu} & A & A \ten A \ar{l}{\mu}
\end{tikzcd}
\]
where $\iota_1$ and $\iota_2$ are the inclusions.
\end{definition}

\begin{remark}
In the definition of $\Bbbk$-algebra above, the diagram on the left states that $\mu$ is associative, and the diagram on the right asserts that $\eta(1_{\Bbbk}) = 1_A$ (the two-sided multiplicative identity in $A$).
\end{remark}

\begin{definition}\label{def:coassoc}
A \emph{coassociative $\Bbbk$-coalgebra $C$} is a $\Bbbk$-module $C$ together with a $\Bbbk$-linear operation $\Delta: C \rightarrow C \ten C$ called comultiplication, and a $\Bbbk$-linear map $\epsilon: A \rightarrow \Bbbk$ called a counit.  These maps must make the following diagrams (the dual diagrams to those in Definition \ref{def:alg}) commute
\[
\begin{tikzcd}
C \ten C \ten C & C \ten C \ar{l}[swap]{\id \ten \Delta} & & C \ten \Bbbk \ar{r}{p_1} & C & \Bbbk \ten C \ar{l}[swap]{p_2} \\
C \ten C \ar{u}{\Delta \ten \id} & C \ar{l}{\Delta} \ar{u}[swap]{\Delta} & & C \ten C \ar{u}{\id \ten \epsilon} & C \ar{u}{\id} \ar{l}{\Delta} \ar{r}[swap]{\Delta} & C \ten C \ar{u}[swap]{\epsilon \ten \id}
\end{tikzcd}
\]
where $p_1$ and $p_2$ are the projections.
\end{definition}

\begin{definition}
If $A$ is a $\Bbbk$-algebra and a $\Bbbk$-coalgebra, then $A$ is called a \emph{$\Bbbk$-bialgebra} if both $\Delta$ and $\epsilon$ are $\Bbbk$-algebra homomorphisms with respect to $\mu$ and $\eta$.
\end{definition}

\begin{definition}\label{def:hopfalg}
A \emph{Hopf algebra} is a $\Bbbk$-bialgebra $\cH$ together with a $\Bbbk$-linear map $S: \cH \rightarrow \cH$ (called an \emph{antipode}) such that $\mu \circ (S \ten \id) \circ \Delta = \mu \circ (\id \ten S) \circ \Delta = \eta \circ \epsilon$; that is, such that $S$ makes the following diagram commute:
\[
\begin{tikzcd}
\ & \cH \ten \cH \ar{rr}{S \ten \id} & & \cH \ten \cH \ar{dr}{\mu} & \\
\cH \ar{ur}{\Delta} \ar{rr}{\epsilon} \ar{dr}[swap]{\Delta} & & \Bbbk \ar{rr}{\eta} & & \cH \\
\ & \cH \ten \cH \ar{rr}[swap]{\id \ten S} & & \cH \ten \cH \ar{ur}[swap]{\mu} & 
\end{tikzcd}
\]
\end{definition}

Certainly not every $\Bbbk$-bialgebra $A$ is a Hopf algebra.  However, if $A$ is graded and connected, then there exists a unique antipode $S$ making it into a Hopf algebra.

\begin{definition}
A $\Bbbk$-bialgebra is called \emph{graded} if $\displaystyle A = \bigoplus_{i \in \ZZ_{\geq 0}} A_i$ and each of $\mu, \eta, \Delta,$ and $\epsilon$ are graded $\Bbbk$-linear maps.  If $A_0 = \Bbbk$, then $A$ is said to be \emph{connected}.
\end{definition}

If $\cH$ is a graded, connected bialgebra, then Takeuchi showed that $\cH$ is a Hopf algebra and gave a formula for the antipode of any element \cite{t71}.

\begin{theorem}[\cite{t71}]
A graded, connected $\Bbbk$-bialgebra $\cH$ is a Hopf algebra, and it has a unique antipode $S$ whose formula is given by
\begin{equation}\label{eqn:tak}
S = \sum_{i \in \ZZ_{\geq 0}} (-1)^i \mu^{i-1} \circ \pi^{\ten i} \circ \Delta^{i-1}
\end{equation}
where $\mu^{-1} = \eta$, $\Delta^{-1} = \epsilon$, and $\pi: \cH \rightarrow \cH$ is the projection map defined by extending linearly the map
\[
\pi|_{\cH_\ell} = \left\{\begin{array}{ll}0 & \mathrm{if}\ \ell = 0,\\ \id & \mathrm{if}\ \ell \geq 1.\end{array}\right.
\]
\end{theorem}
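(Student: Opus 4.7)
The plan is to exploit the convolution algebra structure on $\mathrm{End}_\Bbbk(\cH)$ and recognize the antipode as the convolution inverse of $\id_\cH$, which can be constructed via a geometric series once we invoke gradedness and connectedness.

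First I would introduce the convolution product $f * g := \mu \circ (f \ten g) \circ \Delta$ on $\mathrm{End}_\Bbbk(\cH)$ and verify, using coassociativity of $\Delta$ and associativity of $\mu$, that $*$ is associative with two-sided identity $u := \eta \circ \epsilon$ (using the unit and counit axioms). By construction, a $\Bbbk$-linear map $S$ is an antipode if and only if it is a two-sided $*$-inverse of $\id_\cH$. Thus the whole theorem reduces to showing that $\id_\cH$ is invertible in $(\mathrm{End}_\Bbbk(\cH), *, u)$, and that its inverse is given by Equation~\eqref{eqn:tak}.

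The key algebraic observation is the decomposition $\id_\cH = u + \pi$, where $\pi = \id_\cH - \eta \circ \epsilon$ is the projection onto $\bigoplus_{\ell \geq 1} \cH_\ell$ described in the statement. Formally inverting via a geometric series gives
\[
\id_\cH^{*(-1)} = (u + \pi)^{*(-1)} = \sum_{i \geq 0} (-1)^i \pi^{*i},
\]
and one computes directly that $\pi^{*i} = \mu^{i-1} \circ \pi^{\ten i} \circ \Delta^{i-1}$ (with the conventions $\mu^{-1} = \eta$, $\Delta^{-1} = \epsilon$), which is exactly the summand in \eqref{eqn:tak}. The step I expect to be the main obstacle is justifying that this a priori infinite sum actually defines a $\Bbbk$-linear endomorphism, i.e.\ that it is \emph{locally finite} on $\cH$. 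For this I would fix $x \in \cH_n$ and use the fact that $\Delta$ is graded to write
\[
\Delta^{i-1}(x) \in \bigoplus_{\substack{k_1+\cdots+k_i = n \\ k_j \geq 0}} \cH_{k_1} \ten \cdots \ten \cH_{k_i}.
\]
Since $\pi$ annihilates $\cH_0$, the summand $\pi^{\ten i}(\cH_{k_1} \ten \cdots \ten \cH_{k_i})$ vanishes unless every $k_j \geq 1$; this forces $i \leq n$. Consequently $\pi^{*i}(x) = 0$ for $i > n$, and the formula for $S(x)$ is a finite sum.

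Once local finiteness is established, it remains to verify the two identities $S * \id_\cH = u = \id_\cH * S$. Both follow from the telescoping identity
\[
\sum_{i=0}^{N} (-1)^i \pi^{*i} * (u + \pi) = u + (-1)^N \pi^{*(N+1)},
\]
which on any $x \in \cH_n$ with $N \geq n$ reduces to $u(x)$ by the previous paragraph; the symmetric calculation gives $\id_\cH * S = u$. Finally, uniqueness of the antipode is the standard one-line argument: if $S'$ is another antipode, then associativity of $*$ gives $S' = S' * u = S' * (\id_\cH * S) = (S' * \id_\cH) * S = u * S = S$.
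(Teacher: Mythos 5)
The paper does not prove this statement---it is quoted from Takeuchi \cite{t71} and used as a black box---so there is no in-paper argument to compare against. Your proposal is the standard convolution-algebra proof and is correct: the decomposition $\id_\cH = u + \pi$, the geometric-series inversion, the local-finiteness argument forcing $\pi^{*i}(x)=0$ for $i > n$ when $x \in \cH_n$, and the uniqueness of two-sided inverses in the convolution monoid are exactly the right ingredients, and each step is justified.
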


As stated in the introduction, this formula for $S$ often has a great deal of cancellation in examples.  In this paper, we are concerned with the restriction-contraction Hopf algebra of matroids, which we introduce in the next section.  For this Hopf algebra, we use the split-merge technique introduced in \cite{bs16} to reinterpret (\ref{eqn:tak}) as one with no cancellation.

\subsection{The restriction-contraction matroid Hopf algebra}\label{sec:resconmathopfalg}
We define the main object of study of this note, the restriction-contraction matroid Hopf algebra, which was originally defined in \cite{s94}.  

\begin{definition}
A \emph{minor of a matroid $M$} is any matroid that is gotten by performing a sequence of restrictions and contractions on $M$.
\end{definition}

Let $\cM$ be a collection of matroids closed under taking minors, and let $\widetilde{\cM}$ be the set of isomorphism classes of matroids in $\cM$.  Then direct sum endows $\widetilde{\cM}$ with an associative product \cite{s94} so that we can form the monoid algebra $\Bbbk \widetilde{\cM}$.

\begin{proposition}
Let $\cM$ be a collection of matroids closed under direct sums and minors, and let $M = (E, \cI), N = (F, \cJ) \in \cM$.  Let $\Bbbk \widetilde{\cM}$ be the monoid algebra above with unit map $\eta: \Bbbk \rightarrow \Bbbk \widetilde{\cM}$ (so that $\eta(1_{\Bbbk}) = U^0_0$).  Then $\Bbbk \widetilde{\cM}$ is a Hopf algebra with the following maps:
\[\begin{array}{rcl}
\Bbbk \widetilde{\cM} \ten \Bbbk \widetilde{\cM} & \stackrel{\mu}{\longrightarrow} & \Bbbk \widetilde{\cM} \\
(M,N) & \longmapsto & M \oplus N, \\
& & \\
\Bbbk \widetilde{\cM} & \stackrel{\Delta}{\longrightarrow} & \Bbbk \widetilde{\cM} \ten \Bbbk \widetilde{\cM} \\
M & \longmapsto & \displaystyle \sum_{A \subseteq E} M|_A \ten M/A, \\
& & \\
\Bbbk \widetilde{\cM} & \stackrel{\epsilon}{\longrightarrow} & \Bbbk \\
M & \longmapsto & \left\{\begin{array}{ll}1_{\Bbbk} & \mathrm{if}\ E = \emptyset,\\ 0 & \mathrm{else}.\end{array}\right. \\
\end{array}\]
\end{proposition}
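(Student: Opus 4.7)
The plan is to verify in order: (a) $(\Bbbk\widetilde{\cM},\mu,\eta)$ is an algebra; (b) $(\Bbbk\widetilde{\cM},\Delta,\epsilon)$ is a coalgebra; (c) $\Delta$ and $\epsilon$ are algebra homomorphisms, so we have a bialgebra; (d) the bialgebra is graded and connected; and then invoke Takeuchi's theorem to obtain the antipode, so that $\Bbbk\widetilde{\cM}$ is actually a Hopf algebra. Step (a) is essentially free: direct sum makes $\widetilde{\cM}$ into a commutative monoid with identity $U^0_0$, so the monoid algebra is immediately associative unital.

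For step (b), I would establish coassociativity by expanding both $(\id\ten\Delta)\circ\Delta(M)$ and $(\Delta\ten\id)\circ\Delta(M)$ as double sums and reindexing them to coincide. The needed matroid-theoretic facts are the standard minor identities for disjoint $A,B\subseteq E$: namely $(M/A)/B=M/(A\cup B)$ and $(M/A)|_B=(M|_{A\cup B})/A$, together with the analogous identity for iterated restrictions. Under the substitution $A'=A\cup B$, both iterated comultiplications rewrite as the sum over nested pairs $A\subseteq A'\subseteq E$ of $M|_A\ten(M|_{A'})/A\ten M/A'$. For the counit, note that $\epsilon(M/A)\neq 0$ forces $A=E$, and then $M|_E=M$, so $(\id\ten\epsilon)\circ\Delta(M)=M\ten 1_{\Bbbk}$; the other counit axiom is symmetric.

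For step (c), I would use the distributivity of restriction and contraction over direct sum. Every $A\subseteq E\cup F$ splits uniquely as $A=A_1\sqcup A_2$ with $A_1\subseteq E$, $A_2\subseteq F$, and $(M\oplus N)|_A=M|_{A_1}\oplus N|_{A_2}$, and $(M\oplus N)/A=(M/A_1)\oplus(N/A_2)$. Summing over such $A$ factors $\Delta(M\oplus N)$ as $\Delta(M)\cdot\Delta(N)$ in $\Bbbk\widetilde{\cM}\ten\Bbbk\widetilde{\cM}$ with its componentwise product. Similarly, $\epsilon$ is multiplicative because $E\cup F=\emptyset$ iff $E=F=\emptyset$. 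Step (d) is then immediate: grade by ground-set cardinality, so that all four structure maps are graded (both factors in $M|_A\ten M/A$ contribute $|A|+|E-A|=|E|$), and the degree-zero piece is spanned by the unique (up to isomorphism) trivial matroid $U^0_0$, giving connectivity. Applying Takeuchi's theorem completes the proof.

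The main obstacle is the careful bookkeeping in coassociativity and in the $\Delta$-multiplicativity axiom; both are reindexing arguments that depend on invoking the right minor identities from \cite{o11}. The remaining verifications are straightforward once one sets up the notation for ground-set decompositions.
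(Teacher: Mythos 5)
Your proposal is correct, and it follows the standard route: the paper itself gives no proof of this proposition (it defers to Schmitt \cite{s94}), and your plan---coassociativity via the commuting minor identities and reindexing over nested pairs $A\subseteq A'\subseteq E$, compatibility of $\Delta$ and $\epsilon$ with $\mu$ via the splitting $A=A_1\sqcup A_2$ and distributivity of restriction and contraction over direct sums, and then Takeuchi's theorem applied to the grading by ground-set cardinality---is exactly the verification the paper's framework presupposes. The only detail worth adding is that $\Delta$ is well defined on isomorphism classes, which holds because a matroid isomorphism carries minors to minors.
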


\begin{remark}
We need to choose a suitable class of matroids $\cM$.  Since, in this paper, we are only concerned with computing the antipode for uniform matroids, we could choose $\cM$ to be the set of all uniform matroids.  However, this collection is closed under minors (see Lemma \ref{lem:closedunderminors}) but not under direct sums.  Instead we take $\cM$ to be any suitable collection of matroids consisting of direct sums of minors of uniform matroids.  For example, the collection of all partition matroids (those matroids which are direct sums of uniform matroids) would suffice.
\end{remark}

\section{A cancellation-free antipode formula}
Given a finite set $E$, an \emph{ordered set partition} is a sequence of nonempty disjoint subsets $\pi = (B_1, B_2, \ldots, B_k)$ such that $\cup_i B_i = E$.  The $B_i$ are called \emph{parts}.  If $\pi$ is an ordered set partition of $E$, then we write $\pi \vDash E$.  Sometimes, we will relax the condition that parts must be nonempty.  To denote that some of the $B_i$ may be empty, we write $(B_1, B_2, \ldots, B_k) \vDash_0 E$.  Given a matroid $M = (E, \cI) \in \cM$, let $\Pi_M$ denote the set of all ordered set partitions of the ground set $E$.  

\begin{proposition}\label{prop:unpacktak}
Let $M = (E, \cI) \in \cM$.  Then for $\Bbbk \widetilde{\cM}$, Takeuchi's formula (\ref{eqn:tak}) takes the form:
\[
S(M) = \sum_{k \geq 0} (-1)^k \sum_{(B_1,\ldots,B_k) \vDash E} M|_{B_1} \oplus (M/B_1)|_{B_2} \oplus \cdots \oplus (M/\bigcup_{i=1}^{k-2}B_i)|_{B_{k-1}} \oplus M/\bigcup_{i=1}^{k-1}B_i.
\]
\end{proposition}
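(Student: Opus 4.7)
The plan is to unpack Takeuchi's formula piece by piece, interpreting each of the three maps $\Delta^{i-1}$, $\pi^{\otimes i}$, and $\mu^{i-1}$ in the concrete setting of $\Bbbk\widetilde{\cM}$. The only real content is tracking how successive applications of $\Delta$ produce ordered tuples of disjoint subsets of $E$, and how the projection $\pi$ forces these tuples to be genuine ordered set partitions.

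First I would prove by induction on $i \geq 1$ that
\[
\Delta^{i-1}(M) = \sum_{(C_1,\ldots,C_i) \vDash_0 E} M|_{C_1} \ten (M/C_1)|_{C_2} \ten \cdots \ten (M/\textstyle\bigcup_{j<i-1} C_j)|_{C_{i-1}} \ten M/\textstyle\bigcup_{j<i} C_j,
\]
where the sum is over ordered tuples of pairwise disjoint subsets of $E$ (possibly empty) whose union is $E$. The base case $i=1$ is trivial, and the inductive step uses coassociativity in the form $\Delta^{i} = (\id^{\ten (i-1)} \ten \Delta) \circ \Delta^{i-1}$: applying $\Delta$ to the last tensor factor $M/\bigcup_{j<i} C_j$, which is a matroid on $E \setminus \bigcup_{j<i} C_j$, produces one more summand indexed by a subset $C_i$ of that complement, and the identities $(M/A)|_T = (M/A)|_T$ and $(M/A)/B \cong M/(A\cup B)$ (valid up to isomorphism, which is all we need in $\widetilde{\cM}$) show that the resulting terms have exactly the stated form with $C_{i+1} := E \setminus \bigcup_{j \leq i} C_j$.

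Next I would apply $\pi^{\ten i}$. Since the degree-$0$ piece of $\Bbbk\widetilde{\cM}$ is spanned by $U_0^0$, and since the unique matroid with empty ground set is (isomorphic to) $U_0^0$, the map $\pi$ annihilates every tensor factor whose underlying ground set is empty. A tensor summand thus survives $\pi^{\ten i}$ precisely when every $C_j$ is nonempty, i.e.\ when $(C_1,\ldots,C_i)$ is an honest ordered set partition $(B_1,\ldots,B_i) \vDash E$. Then applying $\mu^{i-1}$, which is iterated direct sum, converts the surviving tensor product into the claimed iterated direct sum, giving the inner sum of the proposition for each $i = k \geq 1$.

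Finally I would combine with the sign $(-1)^k$ and sum over $k \geq 0$, verifying the boundary cases: for $k=0$ the convention $\mu^{-1}=\eta$, $\Delta^{-1}=\epsilon$ yields $\eta \circ \epsilon(M)$, which equals $U_0^0$ when $E = \emptyset$ (matching the unique empty ordered partition of the empty set) and $0$ otherwise (matching the fact that $E \neq \emptyset$ admits no ordered partition into $0$ parts). The only mildly delicate point in the whole argument is the bookkeeping in the induction, specifically justifying that $(M/\bigcup_{j<\ell} C_j)/C_\ell \cong M/\bigcup_{j \leq \ell} C_j$, which follows from the order-independence of contraction stated in Definition \ref{def:contract}; everything else is direct expansion of the defining formulas for $\mu$, $\Delta$, $\eta$, and $\epsilon$ given in the previous section.
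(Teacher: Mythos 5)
Your proof is correct and follows essentially the same route as the paper: expand $\Delta^{k-1}$ over weak ordered set partitions $\vDash_0 E$ via coassociativity, observe that $\pi^{\ten k}$ kills exactly the summands with an empty part, and then apply $\mu^{k-1}$. You simply spell out the induction, the $k=0$ boundary case, and the contraction identity $(M/A)/B \cong M/(A\cup B)$ that the paper leaves implicit.
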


\begin{proof}
First note that
\[
\Delta(M) = \sum_{A \subseteq E} M|A \ten M/A = \sum_{(B_1,B_2)\vDash_0 E} M|B_1 \ten M/B_1.
\]
Since $\Delta$ is coassociative (see Definition \ref{def:coassoc}), it follows that
\[
\Delta^{k-1}(M) = \sum_{(B_1, \ldots, B_k) \vDash_0 E} M|_{B_1} \ten (M/B_1)|_{B_2} \ten \cdots \ten (M/\bigcup_{i=1}^{k-2}B_i)|_{B_{k-1}} \ten M/\bigcup_{i=1}^{k-1}B_i.
\]
Since $\pi$ kills $\Bbbk \widetilde{\cM}_0$, substituting this into (\ref{eqn:tak}) yields
\[
S(M) = \sum_{k \geq 0} (-1)^k \sum_{(B_1,\ldots,B_k) \vDash E} M|_{B_1} \oplus (M/B_1)|_{B_2} \oplus \cdots \oplus (M/\bigcup_{i=1}^{k-2}B_i)|_{B_{k-1}} \oplus M/\bigcup_{i=1}^{k-1}B_i,
\]
as desired.
\end{proof}

In order to produce a cancellation-free antipode formula for the restriction-contraction matroid Hopf algebra $\Bbbk \widetilde{\cM}$, we will utilize the split-merge strategy introduced in \cite{bs16}.  To use this method, we proceed as outlined in Section \ref{sec:methodology}.  First we will show some examples of the computation of the Takeuchi formula for a given ordered set partition $\pi$. That is, given an ordered set partition $\pi$, we show how to construct the associated signless term $T(\pi)$ in the antipode formula. Then we will prove this in generality.

%

\subsection{Computing terms in Takeuchi's formula}
Our running example will be the uniform matroid $M=U^3_5$, which has ground set $E = \{a,b,c,d,e\}$ and independent sets
\[
\cI = \{\emptyset, a, b, c, d, e, ab, ac, ad, ae, bc, bd, be, cd, ce, de,\] \[abc, abd, abe, acd, ace, ade, bcd, bce, bde, cde\}.
\]

\begin{example}
Here are a few examples of computing $T(\pi)$.
\begin{itemize} 
\item Let $\pi_1=ab,cd,e$. Then $T(\pi_1)$ in $S(U^3_5)$ is given by
\[
M|_{ab} \oplus (M/a/b)|_{cd} \oplus M/a/b/{cd} = \{\emptyset,a,b,ab\} \oplus \{\emptyset,c,d\} \oplus \{\emptyset\} \simeq U^2_2 \oplus U^1_2.
\]

\item Let $\pi_2=a,bcd,e$. Then $T(\pi_2)$ in $S(U^3_5)$ is given by
\[
M|_{a} \oplus (M/a)|_{bcd} \oplus M/a/{bcd} = \{\emptyset, a\} \oplus \{\emptyset, b, c, d, bc, bd, cd\} \oplus \{\emptyset\} \simeq U^1_1 \oplus U^2_3.
\]

\item Let $\pi_3=a,bcde$. Then $T(\pi_3)$ in $S(U^3_5)$ is given by
\[
M|_a \oplus M/a = \{\emptyset,a\} \oplus \{\emptyset,b,c,d,e,bc,bd,be,cd,ce,de\} \simeq U^1_1 \oplus U^2_4.
\]
\end{itemize}
Notice that the superscripts sum to three and the subscripts are associated to the sizes of the parts of the partition. Below we make this precise for general $\pi$.
\end{example}

\begin{theorem}\label{thm:computetakterms}
Let $M=U_n^m$ with ground set $\{e_j^t\}$. Given an ordered set partition of the ground set $\pi=e^1_1 \cdots e^{k_1}_1, e^1_2 \cdots e^{k_2}_2, \ldots, e^1_j \cdots e^{k_j}_j$, define  $\ell$ to be the first integer such that $k_1 + k_2 + \cdots + k_{\ell} \geq m$. Then $T(\pi)$ in $S(M)$ can be computed as follows:

\begin{itemize}
\item If $k_1 + k_2 + \cdots + k_{\ell} = m$, then $T(\pi)$ in $S(U^m_n)$ is $U^m_m$ with ground set $\displaystyle \bigcup_{i=1}^{\ell} \{e_i^1,\ldots, e_i^{k_i}\}$. 

\item If $k_1 + k_2 + \cdots + k_{\ell} > m$, then $T(\pi)$ in $S(U^m_n)$ is 
\[
U^{k_1 + \cdots + k_{\ell-1}}_{k_1 + \cdots + k_{\ell-1}} \oplus U^{m-k_1 - k_2 - \cdots - k_{\ell - 1}}_{k_{\ell}}
\]
where the first summand has ground set $\displaystyle \bigcup_{i = 1}^{\ell - 1} \{e^1_i, \ldots, e^{k_i}_i\}$ and the second summand has ground set $\{e_{\ell}^1, \ldots, e_{\ell}^{k_{\ell}}\}$.
\end{itemize}
\end{theorem}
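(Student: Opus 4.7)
The plan is to compute $T(\pi)$ summand by summand using Lemma \ref{lem:closedunderminors}, and then assemble the free pieces via Lemma \ref{lem:free}. Set $K_i := k_1 + \cdots + k_i$; by the definition of $\ell$ we have $K_{\ell-1} < m \leq K_\ell$. The guiding observation is that contracting elements from $U^r_s$ decreases both the rank and the ground-set size by one at each step until the rank reaches zero, after which further contractions shrink only the ground set.

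First I would prove by induction on $i$ — applying Lemma \ref{lem:closedunderminors} one element at a time — that
\[
M \Big/ \bigcup_{s < i} B_s \;\cong\; \begin{cases} U^{m - K_{i-1}}_{n - K_{i-1}} & \text{if } K_{i-1} \leq m, \\[2pt] U^{0}_{n - K_{i-1}} & \text{if } K_{i-1} \geq m, \end{cases}
\]
with underlying ground set $E \setminus \bigcup_{s < i} B_s$, where $B_i := \{e_i^1, \ldots, e_i^{k_i}\}$. Next, the restriction formula of Lemma \ref{lem:closedunderminors} applied to each such contraction yields the $i$-th summand of $T(\pi)$: it is $U^{\min(m - K_{i-1},\, k_i)}_{k_i}$ when $K_{i-1} < m$, and $U^{0}_{k_i}$ when $K_{i-1} \geq m$. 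The inequalities $K_i \leq K_{\ell-1} < m$ for $i < \ell$ force this $i$-th summand to be the free matroid $U^{k_i}_{B_i}$; for $i = \ell$ it is $U^{m - K_{\ell-1}}_{B_\ell}$, which coincides with the free matroid $U^{k_\ell}_{B_\ell}$ precisely when $K_\ell = m$; for $i > \ell$ it is the all-loop matroid $U^{0}_{B_i}$.

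To finish, I would assemble these summands. In case 1 ($K_\ell = m$), the first $\ell$ summands are all free, so repeated application of Lemma \ref{lem:free} collapses them into the single free matroid $U^m_m$ on $\bigcup_{i=1}^{\ell} B_i$. In case 2 ($K_\ell > m$), Lemma \ref{lem:free} collapses the first $\ell - 1$ free summands into $U^{K_{\ell-1}}_{K_{\ell-1}}$ on $\bigcup_{i=1}^{\ell-1} B_i$, and the $\ell$-th summand is $U^{m - K_{\ell-1}}_{k_\ell}$ on $B_\ell$, matching the stated formula.

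The main obstacle is careful bookkeeping: one must track how the rank and ground set evolve through each iterated contraction, and verify the boundary behavior at $i = \ell$ where the formula switches between the two cases. A minor interpretive point is the treatment of the tail summands $U^{0}_{B_i}$ for $i > \ell$: these appear in the literal direct-sum expansion but are absent from the stated formula, matching the convention visible in the preceding examples (where, for instance, the $U^{0}_{1}$ piece implicit in $\pi_1$ is suppressed in the isomorphism); I would make this convention explicit in the write-up.
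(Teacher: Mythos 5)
Your proposal is correct and follows essentially the same route as the paper: iterate Lemma \ref{lem:closedunderminors} across the parts to identify each summand, then collapse the free summands with Lemma \ref{lem:free}. Your remark about the tail summands $U^0_{B_i}$ for $i > \ell$ is well taken---the paper disposes of them by asserting that $M/I/L$ is the trivial matroid, which is only literally true when $I \cup L = E$---so making the loop-suppression convention explicit is an improvement rather than a deviation.
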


\begin{proof}
Since comultiplication is associative we can apply it in any order to a direct sum.  Therefore let us first look at restriction/contraction with respect to the set $I=\bigcup_{i = 1}^{\ell - 1} \{e^1_i, \ldots, e^{k_i}_i\}$. The resulting matroid is
\[
M|_I \oplus M/I. 
\]
Since $k_1 + k_2 + \cdots + k_{\ell-1} < m$, we know that $M|_I$ is the free matroid on the set $I$. Then $(M|_I)|_S$ is the free matroid on the set $S$ for any $S \subset I$. Also $(M|_I)/S$ is the free matroid on the set $I-S$ for any $S \subset I$. Therefore by repeatedly applying Lemma \ref{lem:closedunderminors}, we see that if $I_1, \dots,I_{\ell -1}$ are the first $\ell -1$ parts in $\pi$ we know that after performing all the comultiplication regarding these parts the resulting matroid is 
\[
M|_{I_1} \oplus \cdots \oplus M|_{I_{\ell-1}} \oplus M/I. 
\]

Note that the direct sum of free matroids is itself a free matroid on the union of the ground sets, by Lemma \ref{lem:free}. Therefore  
\[
M|_{I_1} \oplus \cdots \oplus M|_{I_{\ell-1}} \oplus M/I \simeq U_I^{|I|} \oplus M/I.
\]
Now if we apply comultiplication (restrict/contract) to the $\ell^{th}$ part, we will only be changing the summand $M/I$. Again, repeated application of Lemma \ref{lem:closedunderminors} results in the following matroid:

\[
U_I^{|I|} \oplus U_L^{m-|I|} \oplus M/I/L. 
\]

Computing $M/I/L$ requires us to contract $|I|+|L|$ elements and $|I|+|L|\geq m$. The largest independent set in $M$ has cardinality $m$, and therefore $M/I/L$ is the trivial matroid. Therefore,
\[
U_I^{|I|} \oplus U_L^{m-|I|} \oplus M/I/L \simeq U_I^{|I|} \oplus U_L^{m-|I|}.
\]

Notice that in the case where $k_1 + k_2 + \cdots + k_{\ell} = m$ we have 

\[
 U_I^{|I|} \oplus U_L^{m-|I|} \simeq U_{I \cup L}^m.
\] 
\end{proof}

With this construction we get the following corollary.

\begin{corollary}\label{cor:taktermdetbypair}
Given an ordered partition $\pi$, its associated signless term $T(\pi)$ in the antipode formula is determined by a pair of sets $(I, L)$, where
\begin{itemize}
\item $I$ is the set of ground set elements which appear in the first $\ell - 1$ parts of the ordered partition, and
\item $L$ is the $\ell^{\text{th}}$ part.
\end{itemize}
\end{corollary}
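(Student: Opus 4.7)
The plan is to read off the corollary directly from Theorem \ref{thm:computetakterms}. That theorem already gives an explicit formula for $T(\pi)$ in two cases distinguished by whether $k_1+\cdots+k_\ell$ equals $m$ or exceeds it, so the work is simply to observe that in both cases the answer depends on $\pi$ only through the union $I$ of the first $\ell-1$ parts and the $\ell^{\text{th}}$ part $L$.

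First, I would recall that $\ell$ itself is determined by $(I,L)$: it is characterized as the smallest index such that $|I|+|L|\geq m$, and once we know the multiset of parts $B_1,\ldots,B_{\ell-1}$ together with $B_\ell=L$ we can reconstruct $\ell$ without referring to $B_{\ell+1},\ldots,B_j$. Second, in the case $k_1+\cdots+k_\ell=m$, Theorem \ref{thm:computetakterms} gives $T(\pi)\cong U^m_{I\cup L}$, which manifestly depends only on the set $I\cup L$. In the case $k_1+\cdots+k_\ell>m$, the theorem gives
\[
T(\pi)\cong U^{|I|}_I\oplus U^{m-|I|}_L,
\]
which again involves only $I$ and $L$. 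In particular, the internal subdivision of $I$ into the pieces $B_1,\ldots,B_{\ell-1}$ is irrelevant (this is essentially the content of Lemma \ref{lem:free}, which collapses the direct sum of free matroids on the $B_i$ into a single free matroid on $I$), and the later parts $B_{\ell+1},\ldots,B_j$ are irrelevant because $M/I/L$ is already the trivial matroid and any further restriction/contraction leaves it unchanged.

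The only mild subtlety worth mentioning in the write-up is that both the ground sets and the isomorphism types of the two summands are recorded correctly by $(I,L)$: the first summand is $U^{|I|}_I$ with ground set exactly $I$, and the second summand is $U^{m-|I|}_L$ with ground set exactly $L$, so labeling a term by the pair $(I,L)$ loses no information beyond the ordering of $B_1,\ldots,B_{\ell-1}$, which is precisely what we wanted to forget. There is no real obstacle here; the corollary is essentially a repackaging of Theorem \ref{thm:computetakterms} that will serve as the bookkeeping device for defining the sign-reversing involution in the next section.
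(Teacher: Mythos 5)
Your proposal is correct and matches the paper's intent exactly: the paper offers no separate proof, presenting the corollary as an immediate consequence of Theorem \ref{thm:computetakterms}, and your write-up simply makes explicit the observations (both cases of the theorem depend on $\pi$ only through $I$ and $L$, with Lemma \ref{lem:free} collapsing the subdivision of $I$ and triviality of $M/I/L$ making the later parts irrelevant) that the authors leave implicit.
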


\begin{remark}\label{remark}
We want to quickly discuss a special case. Let partitions $\pi_1$ and $\pi_2$ have pairs $(I_1,L_1)$ and $(I_2,L_2)$ respectively, such that both $|I_1| + |L_1|=m$, $|I_2| + |L_2|=m$, and $I_1 \cup L_1 = I_2 \cup L_2$. Then notice that $T(\pi_1)=T(\pi_2)$. This is not an equality that generally holds, but only in the case where $|I| +|L| =m$. Throughout the remainder of the paper we will see this show up as a special case in many of our formulas and algorithms, due to these additional equivalencies.

\end{remark}

\subsection{Defining $\iota_{<}$}
In this section, we will define the sign-reversing involution $\iota_<$. We get a sign-reversing involution for every total ordering, $<$, of the ground set. The result is a map $\iota_<:\Pi_M\rightarrow \Pi_M$. After defining $\iota_<$, we will provide some computational examples. 

Start by fixing a total ordering on the ground set $E$, call it $<$.  Let $\pi=e^1_1 \cdots e^{k_1}_1, e^1_2 \cdots e^{k_2}_2, \ldots, e^1_j \cdots e^{k_j}_j$ be an ordered partition of $E$.  The involution $\iota_<$ will be applied to the parts of $\pi$ starting with the first part. It will first attempt to \textbf{split} the part. If successful no other parts of $\pi$ are changed. If it cannot split the part, then $\iota_<$ will attempt to \textbf{merge} the part with the part immediately following it. If successful no other parts of $\pi$ are changed. If it can neither split nor merge the part, then $\iota_<$ will move on to the next part working left to right. If $\iota_<$ is not applicable to any part of $\pi$, then $\pi$ is a fixed point of $\iota_<$.

Now we will explain what we mean by splitting and merging.
\begin{description}
\item[Splitting] If  the part $e^1_i \cdots e^{k_i}_i$ has more than one element, $\iota$ will try to split this part.  
\begin{itemize}
\item If $k_1 + k_2 + \cdots + k_{\ell} = m$ or $i \neq \ell$, then the part splits.  Let $e^{\text{max}}_i$ be the largest element in $e^1_i \cdots e^{k_i}_i$ with respect to $<$. Now, replace the part $e^1_i \cdots e^{k_i}_i$ with two parts
\[
e_i^{\text{max}}\ \ \ \ \ \ \ \text{and}\ \ \ \ \ \ \ e^1_i \cdots \widehat{e^{\text{max}}_i} \cdots e^{k_i}_i,
\]
where $\widehat{e^{\text{max}}_i}$ means that we omit this element.
\item If $k_1 + k_2 + \cdots + k_{\ell} > m$ and the part is the $\ell^{th}$ part ($i = \ell$), then the involution $\iota_<$ does not apply to this part.
\end{itemize}

\item[Merging] If the part is a single element $e_i^1$, we will try to merge it with the part immediately following it.

\begin{itemize}
\item If $i \neq \ell$ and $e^1_i > e^t_{i+1}$ for all $1 \leq t \leq k_{i+1}$, then it merges.  In this case, we replace $e^1_i,e^1_{i+1}\cdots e^{k_{i+1}}_{i+1}$ with a single part $e^1_i e^1_{i+1} \cdots e^{k_{i+1}}_{i+1}$.
\end{itemize}
\end{description}

\begin{example}
Let us do an example of splitting. Consider the uniform matroid $U^3_5$ with ground set $E = \{a,b,c,d,e\}$ with the total ordering $a<b<c<d<e$.  Consider the ordered partition $\pi=ab, c, de$.  Thus, $\ell = 2$. We check the parts from left to right. Therefore we first check if $\iota$ applies to the first part. Since the first part has more than one element, we check if splitting applies to that part. In this case it does, since $i=1\neq\ell$. We can see that $e^{\text{max}}_i = b$ and therefore
\[
\iota_<(ab, c, de) = b,a,c,de.
\]
\end{example}

\begin{example}
For an example of merging, let us again consider $U^3_5$ with ground set $E = \{a,b,c,d,e\}$ with the total ordering $a<b<c<d<e$. For this example, we will consider the ordered partition $a,c,b,de$.  Here, $\ell = 3$.  We attempt to merge the first part $(i = 1)$.  We cannot since $a < c$.  Then we attempt to merge the second part $(i=2)$.  In this case, we can merge because $c > b$.  Therefore $\iota_<$ applies to the second part and
\[
\iota(a,c,b,de) = a,bc,de.
\]
\end{example}

\begin{theorem}
The map $\iota_<:\Pi_M\rightarrow \Pi_M$ is an involution.
\end{theorem}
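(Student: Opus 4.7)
The proof proceeds by a direct case analysis on the action of $\iota_<$. Since $\iota_<$ scans parts from left to right and applies the first legal move it encounters (either a \emph{split} of a multi-element part or a \emph{merge} of a singleton with its successor), $\iota_<(\pi)$ differs from $\pi$ at exactly one position $i$. My task is to show that $\iota_<$ applied to $\iota_<(\pi)$ also acts at position $i$ and performs the inverse operation.

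The first observation is that splitting and merging are literal inverses on the data. A split replaces $B_i$ by the singleton $\{e_i^{\max}\}$ followed by $B_i\setminus\{e_i^{\max}\}$; since $e_i^{\max}=\max B_i$, every remaining element is strictly smaller. A merge is legal only when the singleton element $e$ exceeds every element of $B_{i+1}$, so the maximum of the merged part $\{e\}\cup B_{i+1}$ is precisely $e$. Thus a split followed by a merge at the same position restores the original part, and likewise a merge followed by a split.

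Next, I must check that $\iota_<$ on $\iota_<(\pi)$ is triggered at position $i$ and not earlier. At positions $j<i$, the parts are unchanged from $\pi$, so the splitting test still fails where it did before; for the merging test at position $i-1$, the successor part has changed, but its maximum is still $\max B_i$ (equal to $e_i^{\max}$ in the split case, or to $e$ in the merge case), so the comparison determining merge-applicability gives the same result. At position $i$ of $\iota_<(\pi)$, the inverse operation is combinatorially applicable by the previous paragraph.

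The main obstacle, and the only nontrivial bookkeeping, is tracking the critical index $\ell$ under the operation. Writing $\ell'=\ell(\iota_<(\pi))$, I must check that the $\ell'$-condition for the inverse operation at position $i$ is satisfied. I will enumerate the subcases $i<\ell$, $i=\ell$, $i>\ell$ for splits and the analogous subcases for merges, computing $\ell'$ in each and confirming the relevant $\ell'$-condition. The shift in $\ell$ mirrors the operation precisely: a split at $i<\ell$ yields $\ell'=\ell+1$ with $i<\ell'$, making the inverse merge legal; a boundary split at $i=\ell$ (permitted only when $k_1+\cdots+k_\ell=m$) also yields $\ell'=\ell+1$ with $k_1+\cdots+k_{\ell'}=m$ preserved; and analogous identifications handle the remaining subcases and the merging side. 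Once every subcase has been verified, $\iota_<\circ\iota_<=\mathrm{id}$ follows.
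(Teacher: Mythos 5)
Your strategy coincides with the paper's: both argue that the first applicable move in $\iota_<(\pi)$ occurs at the same position $i$ and is the inverse move, and your insistence on tracking the critical index $\ell$ is actually more careful than the paper, which never addresses how $\ell$ shifts and dismisses the merge case as ``very similar\dots left to the reader.'' Your treatment of the split subcases is correct. The gap is in the phrase ``analogous identifications handle\dots the merging side'': one merge subcase is not analogous, namely merging the singleton part $i=\ell-1$ into the part $B_\ell$ when $k_1+\cdots+k_\ell>m$. The merge rule as written requires only $i\neq\ell$ plus the comparison condition, so this move is legal; but the merged block then sits at position $i$, the new critical index is $\ell'=i$, and the cumulative sum through position $\ell'$ still exceeds $m$, so the splitting rule explicitly forbids undoing it. Concretely, in $U^3_5$ with $a<b<c<d<e$, take $\pi=(e,\,abcd)$: then $\ell=2$, the merge applies, $\iota_<(\pi)=(abcde)$, and $(abcde)$ is a fixed point of $\iota_<$ (its single part is the $\ell$-th part with cumulative sum $5>3$), so $\iota_<^{\,2}(\pi)\neq\pi$. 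Moreover $T(e,abcd)=U^1_1\oplus U^2_4\neq U^3_5=T(abcde)$, so these two terms must not be paired off in any case.

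The upshot is that the statement is false for the merge rule as literally defined, and neither your argument nor the paper's can close without first amending the definition: merging part $i$ into part $i+1$ must additionally be forbidden when $i+1=\ell$ and $k_1+\cdots+k_\ell>m$, the mirror image of the restriction already imposed on splitting the $\ell$-th part. That this is the intended rule is confirmed by Theorem \ref{thm:fixed}, which lists partitions such as $(e,\,abcd)$ among the fixed points, and by the proof of Lemma \ref{lem:inv}, which assumes a merge leaves the pair $(I,L)$ unchanged. With that amendment every merge subcase does invert --- the only delicate one, $i=\ell-1$ with $k_1+\cdots+k_\ell=m$, yields $\ell'=i$ with cumulative sum exactly $m$, so the reverse split is permitted by the first disjunct of the splitting rule and extracts precisely the element that was merged in --- and your outline then becomes a complete proof.
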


\begin{proof}
We will apply $\iota$ twice to an arbitrary ordered partition 
\[
\pi = e^1_1 \cdots e^{k_1}_1, e^1_2 \cdots e^{k_2}_2, \ldots, e^1_j \cdots e^{k_j}_j \in \Pi_M.
\]
There are several cases.
\begin{description}
\item[Case One] If $\pi$ is a fixed point of $\iota_<$, then $\iota_<^2(\pi) = \iota_<(\iota_<(\pi)) = \iota_<(\pi) = \pi$.
\item[Case Two] If $\pi$ is split by $\iota_<$, then
\[
\iota_<(\pi) = e^1_1 \cdots e^{k_1}_1,\ldots,e^{\text{max}}_i,e^1_i\cdots \widehat{e^{\text{max}}_i}\cdots e^{k_i}_i,\ldots,e^1_j\cdots e^{k_j}_j
\]
for some $i$ where $\iota_<$ did not apply to the parts $1$ through $i-1$.  Then $\iota_<$ will not apply to the first $i-1$ parts of $\iota_<(\pi)$.  It will first attempt to merge $e^{\text{max}}_i$ with $e^1_i \cdots \widehat{e^{\text{max}}_i}\cdots e^{k_i}_i$.  It will succeed, since by definition $e^{\text{max}}_i > e^t_i$ for all $1 \leq t \leq k_i$. Therefore, $\iota_<^2(\pi) = \pi$.
\item[Case Three] If $\pi$ is merged by $\iota_<$, then this argument is very similar to that of case two and is left to the reader to verify.
\end{description}
\end{proof}

\subsection{Characterizing the fixed points of $\iota_<$}
Now we will compute the fixed points of $\iota_<$.

\begin{theorem}\label{thm:fixed}
There are two varieties of fixed points of $\iota_<$. They are ordered set partitions with the following criteria:

\begin{itemize} 
\item $k_i = 1$ for all $i\neq \ell$ (note that $k_\ell$ may or may not also be $1$), and
\item $e_1 < e_2 < e_3 < \ldots < e_{\ell - 1}$ and $e_{\ell + 1} < e_{\ell + 2} < \ldots < e_j$.
 
\item Additionaly if $k_1 + k_2 + \cdots + k_{\ell} = m$, then $k_{\ell}=1$ and $e_{\ell-1}<e_{\ell}$.
\end{itemize}

Here we have suppressed the superscript because each part consists of a single element.
 \end{theorem}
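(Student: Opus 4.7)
The plan is to show both directions of the ``iff'' by walking through the parts of $\pi$ from left to right and, at each part, determining exactly when the split and merge rules of $\iota_<$ fail to apply. A partition is fixed precisely when every rule fails at every part, so matching these failure conditions to the bulleted hypotheses proves the theorem in one sweep.

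First I would extract the first bullet. The split rule fires at any part $i$ with $k_i > 1$ unless $i = \ell$ and $k_1 + \cdots + k_\ell > m$. Hence, for $\pi$ to be fixed, every part $i \neq \ell$ must satisfy $k_i = 1$, and in the saturated case $k_1 + \cdots + k_\ell = m$ part $\ell$ must also be a singleton (since the splitting rule there is no longer blocked). This delivers the first bullet, as well as the $k_\ell = 1$ clause of the third bullet.

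Next, under the first-bullet conditions the relevant parts around $\ell$ are singletons, so only the merge rule can fire on them. The merge rule fires at a singleton part $i \neq \ell$ exactly when $e_i$ exceeds every element of part $i + 1$. Requiring that this \emph{fails} for all $i$ with $i + 1 \leq \ell - 1$ yields the ascending chain $e_1 < e_2 < \cdots < e_{\ell - 1}$, and similarly the requirement for $i \geq \ell + 1$ yields $e_{\ell + 1} < \cdots < e_j$; together these comprise the second bullet. In the saturated case where part $\ell$ is also a singleton, ruling out the merge at $i = \ell - 1$ into part $\ell$ supplies the final inequality $e_{\ell - 1} < e_\ell$ of the third bullet. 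The converse direction is then immediate: if the three bullets hold, retracing the above rule-by-rule and part-by-part check shows that neither splitting nor merging applies anywhere in $\pi$, so $\pi$ is fixed.

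The main technical obstacle is the bookkeeping around part $\ell$: both the splitting rule and the merging rule distinguish $\ell$ from the generic parts, and one must keep straight whether one is in the saturated branch ($k_1 + \cdots + k_\ell = m$) or the strict branch ($> m$). Organizing the case analysis by the position of $i$ relative to $\ell$ and by this saturated/strict dichotomy aligns exactly with the three bullets and with the special behavior of $T$ noted in Remark~\ref{remark}, making the correspondence between failure of $\iota_<$ and the stated conditions transparent.
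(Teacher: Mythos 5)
Your proposal follows essentially the same route as the paper's own proof: first read off the singleton conditions from the failure of the split rule (with the saturated/strict dichotomy at part $\ell$), then read off the two ascending chains and the extra inequality $e_{\ell-1}<e_\ell$ from the failure of the merge rule. One point that both you and the paper pass over: in the strict case $k_1+\cdots+k_\ell>m$ with $k_\ell>1$, the merge rule is still attempted at $i=\ell-1$ (the rule only excludes $i=\ell$), so it can fire when $e_{\ell-1}$ exceeds every element of the $\ell^{\text{th}}$ part --- e.g.\ for $U^3_5$ with $a<b<c<d<e$ the partition $e,abc,d$ meets all the bulleted criteria yet merges under $\iota_<$; a fully airtight characterization would need to add the condition that some element of the $\ell^{\text{th}}$ part exceeds $e_{\ell-1}$ in that case.
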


\begin{proof}
Let $\pi = e^1_1 \cdots e^{k_1}_1, e^1_2 \cdots e^{k_2}_2, \ldots, e^1_j \cdots e^{k_j}_j$ be a fixed point of $\iota_<$. No part of $\pi$ splits under $\iota_<$. The $i^{th}$ part does not split if and only if $k_i=1$ for all $i \neq \ell$. If  $k_1 + k_2 + \cdots + k_{\ell} > m$ then the $\ell^{th}$ part cannot split either. If  $k_1 + k_2 + \cdots + k_{\ell} = m$ then the $\ell^{th}$ part cannot split if and only if $k_{\ell}=1$.

Now that we know that all the parts are size one except possibly the $\ell^{th}$ part, we will suppress the superscript in the partition notation and simply refer to the $i^{th}$ part as $e_i$.

If $k_{\ell} =1$, no part of $\pi$ merges under $\iota_<$ if and only if $e_i < e_{i+1}$ for all $1 \leq i \leq \ell-1$ and for $\ell+1 \leq i \leq j$. In other words the ground set elements that occur in the first $\ell$ parts must occur in increasing order with respect to $<$. Similarly, so must the elements of the ground set that occur after the $\ell^{th}$ part.

If $k_{\ell} \neq 1$, no part of $\pi$ merges under $\iota_<$ if and only if $e_i < e_{i+1}$ for all $1 \leq i \leq \ell-2$ and for $\ell+1 \leq i \leq j$. In other words the ground set elements that occur in the first $\ell-1$ parts must occur in increasing order with respect to $<$. Similarly, so must the elements of the ground set that occur after the $\ell^{th}$ part.
\end{proof}

\begin{corollary}\label{cor:UL}
The fixed points of $\iota_<$ are in one-to-one correspondence with pairs $(I,L)$ of subsets of the ground set $E$, where

\begin{itemize}
\item $I$ and $L$ are disjoint, 
\item $|I|<m$, and
\item $|I|+|L|\geq m$, and
\item if $|I|+|L|=m$ then $|L|=1$ and $e_i < e_\ell$ for all $e_i \in I$.
\end{itemize}
\end{corollary}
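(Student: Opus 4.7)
The plan is to exhibit an explicit bijection between fixed points of $\iota_<$ as characterized by Theorem \ref{thm:fixed} and pairs $(I,L)$ satisfying the four bulleted conditions.

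In the forward direction, I would take a fixed point $\pi = e_1^1\cdots e_1^{k_1},\ldots,e_j^1\cdots e_j^{k_j}$ of $\iota_<$ and let $\ell$ be the index defined in Theorem \ref{thm:computetakterms} (the first index for which $k_1+\cdots+k_\ell \geq m$). Then I would set
\[
I := \bigcup_{i=1}^{\ell-1}\{e_i^1,\ldots,e_i^{k_i}\} \quad\text{and}\quad L := \{e_\ell^1,\ldots,e_\ell^{k_\ell}\}.
\]
The four conditions are then read off directly: disjointness is immediate from $\pi$ being an ordered set partition; $|I|<m$ follows from the minimality of $\ell$; $|I|+|L|\geq m$ follows from the defining inequality for $\ell$; and the equality case $|I|+|L|=m$ forces $k_\ell=1$ by the last bullet of Theorem \ref{thm:fixed}, while the chain $e_1<e_2<\cdots<e_{\ell-1}<e_\ell$ (combining the second and third bullets of that theorem) yields $e_i<e_\ell$ for every $e_i \in I$.

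In the reverse direction, given a pair $(I,L)$ satisfying the four conditions, I would order $I$ in increasing order (with respect to $<$) as $e_1<e_2<\cdots<e_{|I|}$, order $E\setminus(I\cup L)$ in increasing order as $f_1<\cdots<f_r$, and define
\[
\pi(I,L) := (e_1, e_2, \ldots, e_{|I|},\ L,\ f_1, f_2, \ldots, f_r).
\]
Setting $\ell:=|I|+1$, the conditions $|I|<m$ and $|I|+|L|\geq m$ ensure that this $\ell$ is precisely the index produced by Theorem \ref{thm:computetakterms}. All parts except possibly the $\ell^{\text{th}}$ have size one, so no splitting is possible away from position $\ell$; the strictly increasing order of the singletons before position $\ell$ and after position $\ell$ blocks all merges. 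In the equality case $|I|+|L|=m$, the fourth condition gives $|L|=1$ (blocking the split at position $\ell$) and $e_{|I|}<e_\ell$ (blocking the merge of position $\ell-1$ into position $\ell$). Hence $\pi(I,L)$ is a fixed point by Theorem \ref{thm:fixed}.

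The two assignments are mutually inverse by construction, since recovering $I$ and $L$ from $\pi(I,L)$ returns the original pair, and conversely a fixed point is completely determined by its first $\ell-1$ singleton parts (as a set) together with its $\ell^{\text{th}}$ part. There is no substantive obstacle here; the entire corollary is a direct repackaging of Theorem \ref{thm:fixed}, and the only subtlety is keeping track of the dichotomy between $|I|+|L|>m$ and $|I|+|L|=m$, which is exactly where the fourth bullet of the corollary enters.
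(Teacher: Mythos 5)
Your proposal follows the same route as the paper: the paper's own proof is a terser version of exactly this observation, namely that a fixed point is determined by the set $I$ of ground set elements occupying the first $\ell-1$ parts together with the $\ell^{\text{th}}$ part $L$, and that the four bulleted conditions are precisely the translation of Theorem \ref{thm:fixed} into this language. Your version adds value only in that it writes out both directions of the bijection explicitly and checks that $\ell = |I|+1$ is the correct index.

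That said, one step in your reverse direction would fail if checked against the raw definition of $\iota_<$, and it is worth naming because it exposes a gap that the paper itself has. You assert that the increasing order of the singletons before and after position $\ell$ ``blocks all merges,'' and you only address the potential merge of part $\ell-1$ into part $\ell$ in the equality case $|I|+|L|=m$. But the merge rule exempts only $i=\ell$, not $i=\ell-1$: when $|I|+|L|>m$, the singleton in position $\ell-1$ still merges into part $\ell$ whenever it exceeds every element of $L$ with respect to $<$. Concretely, for $U^3_5$ with $a<b<c<d<e$, the pair $I=\{a,e\}$, $L=\{c,d\}$ satisfies all four bulleted conditions, yet the associated partition $(a,e,cd,b)$ is not fixed, since $\iota_<$ merges the part $e$ into the part $cd$. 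This defect originates in Theorem \ref{thm:fixed}, whose proof omits the case $i=\ell-1$ when $k_\ell>1$, and both your proof and the paper's proof of the corollary inherit it. If Theorem \ref{thm:fixed} is taken as given, your verification of its hypotheses is complete and your argument matches the paper's; but your standalone claim about merges being blocked is not correct as stated, and the correspondence in the corollary needs an additional condition (when $|I|+|L|>m$, the largest element of $I$ must not exceed every element of $L$) to be a genuine bijection.
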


\begin{proof}
Let $\pi$ be a fixed point of $\iota_<$. Then $\pi$ is determined by the ground set elements that show up before the $\ell^{th}$ part, call them $I$, the ground set elements in the $\ell^{th}$ part, call them $L$, and the ground set elements that show up after the $\ell^{th}$ part, call them $J$. Any two of these sets $I,L,J$ will determine the third set. The restrictions in the above corollary are exactly the restrictions needed to make sure that $\pi$ is a proper partition, that $L$ is actually the $\ell^{th}$ part of $\pi$, and that the partition satisfies the criteria of Theorem~\ref{thm:fixed}.
\end{proof}

\begin{example}
Again we consider the uniform matroid $U^3_5$ with the ordered ground set $a < b < c < d < e$.  Below are some (but not all) examples of $\iota_<$-fixed points:
\begin{itemize}
\item $a,b,cd,e$
\item $a,b,c,d,e$
\item $a,c,de,b$
\item $b,c,de,a$
\item$a,b,d,c,e$
\end{itemize}
\end{example}

\begin{definition}
Define the sign of an ordered set partition $\pi$ as 
\[
\sgn(\pi) = \left\{\begin{array}{ll}- & \text{if } \pi \text{ has an odd number of parts,}\\+ & \text{if } \pi \text{ has an even number of parts}.\end{array}\right.
\]

Note that $\sgn(\pi)$ determines whether the term that $\pi$ contributes to Takeuchi's summation is $T(\pi)$ or $-T(\pi)$.
\end{definition}

\begin{lemma}\label{lem:sgn}
Let $\pi$ be an ordered set partition. Then $\sgn(\iota_<(\pi))=-\sgn(\pi)$ if $\pi$ is not a fixed point of $\iota_<$.
\end{lemma}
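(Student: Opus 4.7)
The plan is to exploit the fact that $\iota_<$ acts on a non-fixed point $\pi$ by performing exactly one local operation: either a split or a merge. I would begin by observing that both operations are mutually exclusive (the involution stops at the first part where something applies) and that every non-fixed point admits exactly one of them, since that is precisely how being a fixed point was defined.

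Next I would track the effect of each operation on the number of parts. In the splitting case, one part $e_i^1 \cdots e_i^{k_i}$ is replaced by two parts $e_i^{\text{max}}$ and $e_i^1 \cdots \widehat{e_i^{\text{max}}} \cdots e_i^{k_i}$, so the total number of parts increases by exactly $1$. In the merging case, two consecutive parts $e_i^1$ and $e_{i+1}^1 \cdots e_{i+1}^{k_{i+1}}$ are replaced by the single part $e_i^1 e_{i+1}^1 \cdots e_{i+1}^{k_{i+1}}$, so the total number of parts decreases by exactly $1$. In either situation the parity of the number of parts flips.

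Finally I would conclude by invoking the definition of $\sgn$: since $\sgn(\pi)$ depends only on the parity of the number of parts of $\pi$, flipping that parity flips the sign, yielding $\sgn(\iota_<(\pi)) = -\sgn(\pi)$, as claimed. There is no significant obstacle here; the proof is essentially a bookkeeping check, and the only thing to be careful about is confirming that the split/merge dichotomy really is exhaustive on non-fixed points, which follows directly from the characterization established in the definition of $\iota_<$.
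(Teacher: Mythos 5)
Your proposal is correct and matches the paper's argument: the paper likewise observes that $\iota_<(\pi)$ has exactly one more part (split) or one fewer part (merge) than $\pi$, so the parity of the number of parts, and hence the sign, flips. Your version just spells out the bookkeeping in slightly more detail.
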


\begin{proof}
Suppose $\pi$ is not a fixed point.  Then we know $\sgn(\iota_<(\pi)) = -\sgn(\pi)$ because $\iota_<(\pi)$ has exactly one more or one fewer part than $\pi$.
\end{proof}

\begin{lemma}\label{lem:inv}
Let $\pi$ be an ordered set partition. Then $T(\iota_<(\pi))=T(\pi)$ in $S(U_n^m)$.
\end{lemma}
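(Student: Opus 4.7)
The plan is to reduce the problem to tracking what $\iota_<$ does to the pair $(I,L)$. By Corollary~\ref{cor:taktermdetbypair}, $T(\pi)$ depends only on $(I,L)$; and by Remark~\ref{remark}, in the boundary case $|I|+|L|=m$ it is enough to preserve the union $I\cup L$ together with the equality $|I'|+|L'|=m$. So the entire lemma will follow once I show that, in every case, $\iota_<$ either leaves $(I,L)$ unchanged or sends it to a new pair $(I',L')$ satisfying $|I|+|L|=|I'|+|L'|=m$ and $I\cup L=I'\cup L'$.

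I would then case-split on whether $\iota_<$ acts by a split or a merge, and on the position of the affected part relative to $\ell$. For a split at the $i$-th part, a short recomputation of the cumulative sums $k_1+\cdots+k_t$ yields: if $i<\ell$, then $\ell'=\ell+1$ with $(I',L')=(I,L)$; if $i>\ell$, then $\ell'=\ell$ with again $(I',L')=(I,L)$; and in both subcases $T(\iota_<(\pi))=T(\pi)$ immediately. The remaining split subcase $i=\ell$ is, by the definition of $\iota_<$, triggered only when $k_1+\cdots+k_\ell=m$; one then checks $\ell'=\ell+1$, $I'=I\cup\{e_\ell^{\max}\}$, and $L'=B_\ell\setminus\{e_\ell^{\max}\}$, so $|I'|+|L'|=m$ and $I'\cup L'=I\cup L$, and Remark~\ref{remark} closes the case.

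The merging case is dual. When the merge happens strictly to one side of position $\ell$ (either $i+1<\ell$ or $i>\ell$), the analogous cumulative-sum bookkeeping again gives $(I',L')=(I,L)$. The only delicate situation is a merge of the $(\ell-1)$-th part into the $\ell$-th part; consistency with Theorem~\ref{thm:fixed} (which has no constraint on $e_{\ell-1}$ versus $B_\ell$ in the strict-inequality regime) pins this down to the equality case $k_1+\cdots+k_\ell=m$, and there one checks $\ell'=\ell-1$, $I'=I\setminus B_{\ell-1}$, and $L'=B_{\ell-1}\cup L$, so once more $|I'|+|L'|=m$ and $I'\cup L'=I\cup L$, and Remark~\ref{remark} concludes.

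The main obstacle is the index bookkeeping rather than anything matroid-theoretic: one must verify, for each operation, that the new distinguished index $\ell'$ really picks out the claimed initial parts and $\ell'$-th part so that $(I',L')$ matches $(I,L)$ or is Remark~\ref{remark}-equivalent to it. Once that is in hand, the identity to prove collapses either to a tautology or to the free-matroid identity $U_I^{|I|}\oplus U_L^{m-|I|}\cong U_{I\cup L}^m$ packaged by Remark~\ref{remark}, which is itself a direct consequence of Lemma~\ref{lem:free}.
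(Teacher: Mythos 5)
Your overall strategy is the same as the paper's: reduce everything to the pair $(I,L)$ via Corollary~\ref{cor:taktermdetbypair}, and invoke Remark~\ref{remark} in the boundary case $|I|+|L|=m$. Your case analysis is considerably more explicit than the paper's one-line justification, and the bookkeeping in the split cases (including the $i=\ell$ split, where $\ell'=\ell+1$, $I'=I\cup\{e_\ell^{\max}\}$, $L'=B_\ell\setminus\{e_\ell^{\max}\}$) and in the merges that stay strictly on one side of $\ell$ is correct.

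The one step that does not stand on its own is your treatment of a merge of the $(\ell-1)$-st part into the $\ell$-th part. You assert that this can only happen when $k_1+\cdots+k_\ell=m$, citing ``consistency with Theorem~\ref{thm:fixed}''; but that theorem is a \emph{consequence} of the definition of $\iota_<$, so it cannot be used to constrain what $\iota_<$ does. The stated merge rule requires only that part $i$ be a singleton, $i\neq\ell$, and $e^1_i>e^t_{i+1}$ for all $t$; it places no restriction on the cumulative sum. Under that literal reading your claim fails, and so does the lemma: in $U^3_5$ with $a<b<c<d<e$, the partition $\pi=e,abcd$ has $\ell=2$, $I=\{e\}$, $L=\{a,b,c,d\}$, the singleton $e$ merges into $abcd$, and $T(\pi)=U^1_{\{e\}}\oplus U^2_{\{a,b,c,d\}}$ while $T(\iota_<(\pi))=U^3_{\{a,b,c,d,e\}}$. (The same example shows $\iota_<$ fails to be an involution as literally defined, since $eabcd$ is a fixed point.) So the definition of merging must be amended to forbid a merge into the $\ell$-th part when $k_1+\cdots+k_\ell>m$; with that amendment your argument closes. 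To be fair, the paper's own proof makes the identical silent assumption --- the sentence ``$\iota_<$ does not apply to the $\ell^{th}$ part if $k_1+\cdots+k_\ell\neq m$'' overlooks that a merge applied at position $\ell-1$ changes the $\ell$-th part --- so you have correctly isolated the only genuinely delicate point; you just need to state the restriction on merging as part of the definition rather than infer it from a downstream theorem.
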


\begin{proof}
Let $\pi$ be any ordered set partition of the ground set of $U_n^m$. Now recall that $T(\pi)$ in $S(U^m_n)$ is determined by the pair $(I,L)$ by Corollary \ref{cor:taktermdetbypair}.  Since $\iota_<$ does not apply to the $\ell^{th}$ part if $k_1 + k_2 + \cdots + k_{\ell} \neq m$, the pair corresponding to $\iota_<(\pi)$ is the same as that of $\pi$.  Hence,  $T(\iota_<(\pi))=T(\pi)$. If $k_1 + k_2 + \cdots + k_{\ell} = m$ then let $\pi'=\iota_<(\pi)$. Then $\pi'$ also has the property that $k_1' + k_2' + \cdots + k_{\ell}' = m$ because we either move an element in $L$ into $I$ or from $I$ into $L$. Hence from Remark \ref{remark} $T(\pi)=T(\pi')$.
\end{proof}

\begin{theorem}\label{thm:main}
We have the following formula for the antipode of uniform matroids. Let $E$ be the ground set for the matroid $U_n^m$. Then,
\[
S(U^m_n) = \displaystyle \sum_{\pi} \sgn(\pi) U^{|I|}_I \oplus U^{m-|I|}_L,
\]
where $\pi$ ranges over all ordered set partitions of $E$ that are fixed points of $\iota_<$.
\end{theorem}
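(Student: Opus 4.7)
The plan is to assemble the ingredients that have already been proved — the unpacked Takeuchi formula (Proposition \ref{prop:unpacktak}), the computation of terms (Theorem \ref{thm:computetakterms} / Corollary \ref{cor:taktermdetbypair}), the involution $\iota_<$, and the two lemmas about its effect on signs and on associated terms — and run the standard sign-reversing-involution cancellation argument.

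More concretely, I would start from Proposition \ref{prop:unpacktak}, rewriting it in the compact form
\[
S(U^m_n) \;=\; \sum_{\pi \in \Pi_{U^m_n}} \sgn(\pi)\, T(\pi),
\]
where $T(\pi)$ denotes the matroid associated to the ordered set partition $\pi$ as described in Theorem \ref{thm:computetakterms}. Next, I would partition $\Pi_{U^m_n}$ as $F \sqcup N$, where $F$ is the set of fixed points of $\iota_<$ and $N$ is its complement. On $N$, the map $\iota_<$ is a fixed-point-free involution (this has already been verified), so it matches each $\pi \in N$ with a distinct partner $\iota_<(\pi) \in N$. By Lemma \ref{lem:sgn}, $\sgn(\iota_<(\pi)) = -\sgn(\pi)$, and by Lemma \ref{lem:inv}, $T(\iota_<(\pi)) = T(\pi)$. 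Hence the contributions of $\pi$ and $\iota_<(\pi)$ to the sum above cancel, so summing over $N$ gives zero.

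What remains is $S(U^m_n) = \sum_{\pi \in F} \sgn(\pi) T(\pi)$. Here I would invoke Corollary \ref{cor:taktermdetbypair} to identify $T(\pi)$ with the pair $(I,L)$ determined by $\pi$. When $|I|+|L| > m$, Theorem \ref{thm:computetakterms} directly gives $T(\pi) = U^{|I|}_I \oplus U^{m-|I|}_L$. When $|I|+|L| = m$, Theorem \ref{thm:computetakterms} identifies $T(\pi)$ with $U^m_{I \cup L}$, but Lemma \ref{lem:free} lets us rewrite this as $U^{|I|}_I \oplus U^{m-|I|}_L$ (with $m-|I| = |L|$), so the displayed formula is uniformly correct. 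This finishes the proof.

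The only subtle step is the boundary case $|I|+|L|=m$, because there the splitting/merging behavior of $\iota_<$ is governed by Remark \ref{remark}: two different partitions can have the same associated term, and $\iota_<$ is designed so that it still pairs off non-fixed-points consistently. I expect the main obstacle, if any, to be making sure that in this boundary case the fixed points of $\iota_<$ really do parametrize the pairs $(I,L)$ with $|I|+|L|=m$ bijectively (via Corollary \ref{cor:UL}), and that Lemma \ref{lem:inv} still applies to match $T(\pi)$ with $T(\iota_<(\pi))$ — but both of these points have been checked in the preceding results, so invoking them correctly is all that is required.
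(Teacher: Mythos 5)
Your proposal is correct and follows essentially the same route as the paper: invoke Proposition \ref{prop:unpacktak}, use Lemmas \ref{lem:sgn} and \ref{lem:inv} to cancel the non-fixed points in pairs, and then apply Theorem \ref{thm:computetakterms} to identify the surviving terms. Your extra care with the boundary case $|I|+|L|=m$ (rewriting $U^m_{I\cup L}$ via Lemma \ref{lem:free}) is a detail the paper leaves implicit but is the same argument.
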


\begin{proof}
By Lemma \ref{lem:sgn} and Lemma \ref{lem:inv} we see that $\iota_<$ is a sign-reversing involution on $\Pi_{U^m_n}$, the set of ordered set partitions of E.  It follows that we can sum over the fixed points of $\iota_<$. Therefore 

\begin{align*}
S(U^m_n) &= \displaystyle \sum_{\text{fixed } \pi} \sgn(\pi) T(\pi)\\
&= \displaystyle \sum_{\text{fixed } \pi} \sgn(\pi) U^{|I|}_I \oplus U^{m-|I|}_L. \hspace{2cm} (\text{By Theorem \ref{thm:computetakterms}})
\end{align*}
\end{proof}

\begin{corollary}
The summation in Theorem \ref{thm:main} is cancellation free and can also be expressed in terms of $(I,L)$.
\[
S(U^m_n) = \displaystyle \sum_{I,L} (-1)^{n-|L|+1} U^{|I|}_I \oplus U^{m-|I|}_L,
\]
 where $I,L$ ranges over all pairs of subsets of $E$ such that
\begin{itemize}
\item $I$ and $L$ are disjoint, 
\item $|I|<m$, and
\item $|I|+|L|\geq m$, and
\item if $|I|+|L|=m$ then $|L|=1$ and $e_i < e_\ell$ for all $e_i \in I$.
\end{itemize}
\end{corollary}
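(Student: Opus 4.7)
The plan is to deduce this corollary directly from Theorem \ref{thm:main} combined with the bijection of Corollary \ref{cor:UL}. Theorem \ref{thm:main} already writes $S(U^m_n)$ as a signed sum over fixed points of $\iota_<$, and Corollary \ref{cor:UL} identifies those fixed points with admissible pairs $(I,L)$. So the remaining work splits into two pieces: first, computing $\sgn(\pi)$ for a fixed point $\pi$ in terms of $(I,L)$; and second, verifying that distinct admissible pairs produce distinct summands, so that no cancellation occurs in the resulting sum.

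For the sign, I would invoke Theorem \ref{thm:fixed}: every part of a fixed point $\pi$ is a singleton except possibly the $\ell$-th part, which has size $|L|$. Since the ground set has $n$ elements, the number of parts of $\pi$ equals $(n - |L|) + 1 = n - |L| + 1$. By the definition of $\sgn$, this gives $\sgn(\pi) = (-1)^{n-|L|+1}$, matching the sign appearing in the corollary.

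For cancellation-freeness, I would split into two cases based on whether $|I| + |L| > m$ or $|I| + |L| = m$. In the first case, $|L| > m - |I|$, so the summand $U^{m-|I|}_L$ is not a free matroid; its rank $m-|I|$ and ground set $L$ can be read off from the matroid, whence $(I, L)$ is recovered. In the second case, Remark \ref{remark} warns that $U^{|I|}_I \oplus U^{|L|}_L \cong U^m_{I \cup L}$, so two admissible pairs with the same union $I \cup L$ would produce identical terms; the extra clause in the admissibility list (namely $|L| = 1$ and the lone element of $L$ being the $<$-maximum of $I \cup L$) is designed precisely to single out a unique representative from each such equivalence class.

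The main subtlety, and the only point where one could get the counting wrong, is this boundary case $|I| + |L| = m$: I must check that the side condition on $L$ picks exactly one admissible representative per collapsing class of splits, so that the sum neither over-counts nor under-counts. Once that is verified, combining the sign computation and the cancellation-freeness check with Theorem \ref{thm:main} yields the corollary as a bookkeeping rewriting through the bijection of Corollary \ref{cor:UL}.
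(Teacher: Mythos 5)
Your proposal is correct and follows essentially the same route as the paper: combine Theorem \ref{thm:main} with the bijection of Corollary \ref{cor:UL}, then check that the map from admissible pairs $(I,L)$ to terms is injective, with the free-matroid case $|I|+|L|=m$ (where Remark \ref{remark} causes collapsing) handled by the maximality side condition on $L$. You additionally make explicit the count of parts $n-|L|+1$ giving the sign, a detail the paper leaves implicit, and your case split on $|I|+|L|>m$ versus $|I|+|L|=m$ is just an unfolding of the paper's dichotomy ``either the pairs coincide or both terms are the free matroid on $I\cup L$.''
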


\begin{proof}
To obtain the formula combine Corollary \ref{cor:UL} and Theorem \ref{thm:main}. 

To see that this is in fact cancellation free we need to show that no two distinct pairs $(I_1,L_1)$ and $(I_2,L_2)$ which satisfy the conditions have the same term in the summation. We will prove this by contradiction. Consider their terms $U_{I_1}^{|I_1|} \oplus U_{L_1}^{m-|I_1|}$ and $U_{I_2}^{|I_2|} \oplus U_{L_2}^{m-|I_2|}$ and assume that $U_{I_1}^{|I_1|} \oplus U_{L_1}^{m-|I_1|} = U_{I_2}^{|I_2|} \oplus U_{L_2}^{m-|I_2|}$. Then $I_1 \cup L_1=I_2 \cup L_2$. Additionally either $I_1=I_2$ and $L_1=L_2$, or $|I_1|+|L_1|=|I_2|+|L_2|=m$ \cite{o11}. Since they are distinct terms, we know the first situation cannot occur. Therefore  $|I_1|+|L_1|=|I_2|+|L_2|=m$ and so $|L_1|=|L_2|=1$. Since $e_i <e_{\ell}$ for each element $e_i \in I$, we know that there is exactly one choice $L_t$ and so the two pairs are the same. This is a contradiction---thus, there was only ever one pair satisfying our condition that produces the term  $U^{|I|}_I \oplus U^{m-|I|}_L$. Hence, our summation is cancellation free.
\end{proof}

\begin{remark}
Notice that despite its apparent dependence on the choice of total ordering, the antipode is actually independent of the ordering. Each ordering will give a distinct involution which can be used to produce the antipode.
\end{remark}

\section{Future Plans}
In the case of the restriction-contraction Hopf algebra for matroids, the authors believe that the above techniques can be generalized to give a cancellation-free formula for the antipode of more general matroids. They have given some consideration towards computing the antipode for direct sums of matroids whose antipodes are known. Since a decomposition formula for generic matroids is currently not known, this is not sufficient to produce an antipode formula for all matroids.  In any case, the next step will be to attempt to utilize the techniques of this paper to find cancellation-free formulas for other families of matroids.  We hope to add these results to this document in a future version.

\bibliography{refs}{}
\bibliographystyle{alpha}

\end{document}